\newcommand{\on}{\operatorname}
\newcommand{\mf}{\mathfrak}
\newcommand{\mc}{\mathcal}
\newcommand{\fun}{\mathbb{F}_1}
\renewcommand{\H}{{\mathcal H}}
\newcommand{\A}{\on{A}}
\newcommand{\mspec}{\on{MSpec}}
\newcommand{\p}{\mathfrak{p}}
\newcommand{\ol}{\overline}
\newcommand{\nc}{\newcommand}
\nc{\Ann}{\on{Ann}}
\nc{\Supp}{\on{Supp}}
\nc{\SK}{\mathbf{SK}}
\nc{\sk}{\mathfrak{sk}}
\nc{\skn}{\mathfrak{sk}_n}
\nc{\h}{\mathfrak{h}}
\nc{\g}{\mathfrak{g}}
\nc{\n}{\mathfrak{n}}
\nc{\m}{\mathfrak{m}}
\nc{\ch}{\on{CH}}
\nc{\wt}{\widetilde}
\nc{\F}{\mc{F}}
\nc{\C}{\mc{C}}
\nc{\M}{\on{M}}
\nc{\T}{\mc{T}}
\renewcommand{\H}{\on{H}}
\nc{\G}{\mc{G}}
\nc{\ov}{\overline}
\nc{\VFun}{\on{Vect}(\fun)}
\nc{\FF}{\mathbb{F}}
\nc{\Pone}{\mathbb{P}^1}
\nc{\Aone}{\mathbb{A}^1}
\renewcommand{\fun}{\mathbb{F}_1}
\renewcommand{\mf}{\mathfrak}
\nc{\slthat}{\widehat{\mf{sl}}_2}
\renewcommand{\a}{\mathfrak{a}}
\renewcommand{\p}{\mathfrak{p}}
\nc{\spec}{\on{MSpec}}
\nc{\Msch}{\mc{M}sch}
\nc{\mt}{\widetilde{M}}
\nc{\Pn}{\fun \langle x_1, \cdots, x_n \rangle}
\nc{\Amod}{\on{A-mod}}
\nc{\Anmod}{\Amod_{n}}
\nc{\AnNmod}{\Amod_{n}^{\alpha}}
\nc{\Zplus}{\mathbb{Z}_{\geq 0}}
\nc{\funmod}{\on{\fun-mod}}
\nc{\ra}{\rightarrow}
\nc{\supp}{\on{Supp}}
\nc{\Zn}{\mathbb{Z}^n}
\nc{\Ztwo}{\mathbb{Z}^2}
\theoremstyle{plain}
\newtheorem{theorem}{Theorem}[subsection]
\newtheorem{thm-defn}{Theorem/Definition}
\newtheorem{proposition}{Proposition}[subsection]
\newtheorem{lemma}[theorem]{Lemma}
\newtheorem{lem-defn}{Lemma/Definition}[section]
\theoremstyle{definition}
\newtheorem{rmk}[theorem]{Remark}
\newtheorem{example}{Example}
\newtheorem{definition}[theorem]{Definition}
\newtheorem*{theorem*}{Theorem}
\newtheorem*{corollary*}{Corollary}
\theoremstyle{rmk}
\begin{document}
\title{The Hopf algebra of skew shapes, torsion sheaves on $\mathbb{A}_{/ \fun}^n$, and ideals in Hall algebras of monoid representations}
  \author{ Matt Szczesny}
  \date{}
  \maketitle

\begin{abstract}
We study ideals in Hall algebras of monoid representations on pointed sets corresponding to certain conditions on the representations. These conditions include the property that the monoid act via partial permutations, that the representation possess a compatible grading, and conditions on the support of the module. Quotients by these ideals lead to combinatorial Hopf algebras which can be interpreted as Hall algebras of certain sub-categories of modules. In the case of the free commutative monoid on $n$ generators, we obtain a co-commutative Hopf algebra structure on $n$-dimensional skew shapes, whose underlying associative product amounts to a "stacking" operation on the skew shapes. The primitive elements of this Hopf algebra correspond to connected skew shapes, and form a graded Lie algebra by anti-symmetrizing the associative product. We interpret this Hopf algebra as the Hall algebra of a certain category of coherent torsion sheaves on $\mathbb{A}_{/ \fun}^n$ supported at the origin, where $\fun$ denotes the field of one element. This Hopf algebra may be viewed as an $n$-dimensional generalization of the Hopf algebra of symmetric functions, which corresponds to the case $n=1$. 
\end{abstract}

\section{Introduction}

This paper introduces a Lie algebra structure $\skn$ on $n$-dimensional connected skew shapes. The enveloping algebra $\mathbb{U}(\skn)$ is constructed as the Hall algebra of a sub-category of torsion sheaves on $\mathbb{A}^n_{/\fun}$ supported at the origin, where $\fun$ denotes the "field of one element". The connected skew shapes correspond to the indecomposable objects of this category. In the introduction below we explain how the Hall algebra construction, applied in non-additive contexts such as that of algebraic geometry over $\fun$ produces combinatorial Hopf algebras of a representation theoretic nature which can be viewed as degenerations of quantum-group like objects over $\mathbb{F}_q$. 

\subsection{Hall algebras of Abelian categories}

The study of Hall algebras is by now a well-established area with several applications in representation theory and algebraic geometry (see \cite{S} for a very nice overview). We briefly recall the generic features of the most basic version of this construction. Given an abelian category $\mc{C}$, let 
\[
{Fl}_{i}(\mc{C}) := \{ A_0 \subset A_1 \subset \cdots \subset A_i \vert \; A_k \in \on{Ob}(\C) \}
\]
denote the stack parametrizing flags of objects in $\mc{C}$ of length $i+1$ (viewed here simply as a set). Thus $${Fl}_0 (\C) = \on{Iso}(\C), $$ the moduli stack of isomorphism classes of objects of $\C$, and $${Fl}_1(\C) = \{ A_0 \subset A_1 \vert \; A_0, A_1 \in \on{Ob}(\C) \}$$ is the usual Hecke correspondence. We have maps 
\begin{equation} \label{Hecke_correspondence}
\pi_i : Fl_{1} (\C) \rightarrow Fl_{0} (\C), \; i=1, 2, 3
\end{equation}
where
\begin{align*}
\pi_1 (A_0 \subset A_1) &= A_0 \\
\pi_2 (A_0 \subset A_1) &= A_1 \\
\pi_3 (A_0 \subset A_1) &= A_1/A_0 \\
\end{align*}
We may then attempt to define the Hall algebra of $\C$ as the space of $\mathbb{Q}$-valued functions on $Fl_0 (\C)$ with finite support, i.e.
\[
\on{H}_{\C} = \mathbb{Q}_c[Fl_0 (\C)]
\]
with the convolution product defined for $f, g \in \on{H}_{\C}$
\[
f \star g := \pi_{2 *}(\pi^*_3(f) \pi^*_1 (g)),
\]
where $\pi^*_i$ denotes the usual pullback of functions and $\pi_{i *}$ denotes integration along the fiber. To make this work, one has to impose certain finiteness conditions on $\C$. The simplest, and most restrictive such condition is that $\C$ is \emph{finitary}, which means that $\on{Hom}(M,M')$ and $\on{Ext}^1(M,M')$ are finite sets for any pair of objects $M,M' \in \C$. Important examples of categories $\C$ having these properties are $\C=\on{Rep}(Q,\mathbb{F}_{q})$ - the category of representations of a quiver $Q$ over a finite field $\mathbb{F}_q$, and $\C = Coh(X/\mathbb{F}_q)$ - the category of coherent sheaves on a smooth projective variety $X$ over $\mathbb{F}_q$. In these examples, the structure constants of $\on{H}_{\C}$ depend on the parameter $q$, and $\on{H}_{\C}$ recovers (parts of) quantum groups and their higher-loop generalizations. The basic recipe for constructing $\on{H}_{\C}$ sketched here has a number of far-reaching generalizations that extend well beyond the case of $\C$ finitary (see \cite{S}). 

The case when $\C = Coh(X_{/\mathbb{F}_q})$ is especially rich, and reasonably explicit results about the structure of $\H_{\C}$ are only available when $dim(X)=1$. In this special case, the stacks $Fl_i (\C)$ are naturally the domains of definition of automorphic forms for the function field $\mathbb{F}_q (X)$, and so "live in" the Hall algebra $\H_{\C}$. Here, the theory makes contact with the Langlands program over function fields (for more on this, see the beautiful papers \cite{K1,KSV, BS, SE1}). When $dim(X) \geq 2$, the detailed structure of $\H_{\C}$ remains quite mysterious, and the technical problems are considerable. 

\subsection{Hall algebras in a non-additive setting}

A closer examination of the basic construction of $\H_{\C}$ outlined above shows that the assumption that $\C$ be Abelian is unnecessary. All that is needed to make sense of the Hecke correspondence (\ref{Hecke_correspondence}) used to define $\H_{\C}$ is a category with a reasonably well-behaved notions of kernels/co-kernels and exact sequences. For instance, the paper \cite{DK} describes a class of "proto-exact" categories, which need not be additive, and for which the basic setup above can be made to work. Working in a non-additive context is sometimes given the catch-all slogan of "working over $\fun$", where $\fun$ stands for the mythical "field of one element".  The terminology and notation stem from the observation that functions enumerating various "linear" objects over $\mathbb{F}_q$ often have a limit $q \rightarrow 1$ which enumerates some analogous combinatorial objects. This is best illustrated with the example of the Grassmannian, where the number of $\mathbb{F}_q$-points is given by a $q$-binomial coefficient:
\[
\# \on{Gr}(k,n)/\mathbb{F}_q = \frac{[n]_q !}{[n-k]_q ! [k]_q !} 
\]
where
\[
[n]_q ! = [n]_q [n-1]_q \ldots [2]_q
\]
and 
\[
[n]_q = 1 + q + q^2 + \ldots + q^{n-1}
\]
In the limit $q \rightarrow 1$ this reduces to the binomial coefficient $\binom{n}{k}$, counting $k$-element subsets of an $n$-element set. This suggests that a vector space over $\fun$ should be a (pointed) set, with cardinality replacing dimension. 

Many examples of non-additive categories $\C$ where $\H_{\C}$ can be constructed thus come from combinatorics. Here, $\on{Ob}(\C)$ typically consist of combinatorial structures equipped an operation of "collapsing" a sub-structure, which corresponds to forming a quotient in $\C$. Examples of such $\C$ include trees, graphs, posets, matroids, semigroup representations on pointed sets, quiver representations in pointed sets etc. (see \cite{KS, Sz1, Sz2, Sz3, Sz4} ). The product in $\H_{\C}$, which counts all extensions between two objects, thus amounts to enumerating all combinatorial structures that can be assembled from the two. In this case, $\H_{\C}$ is (dual to) a combinatorial Hopf algebra in the sense of \cite{LR}. Many combinatorial Hopf algebras arise via this mechanism. 

\subsection{Hall algebras over $\fun$ as degenerations of Hall algebras over $\mathbb{F}_q$}

One may ask how the $q \rightarrow 1$ limit behaves at the level of Hall algebras. Let us illustrate this via an example. Given a Dynkin quiver $Q$, we may consider both the Abelian category $\on{Rep}(Q,\mathbb{F}_q)$ and the non-additive category $\on{Rep}(Q, \mathbb{F}_1)$. The latter is defined by replacing ordinary vector spaces at each vertex by pointed sets (i.e. "vector spaces over $\fun$" - see \cite{Sz4} for details). The Lie algebra $\g_{Q}$ with Dynkin diagram (the underlying unoriented graph) $Q$ has a triangular decomposition $$ \g_{Q} = \n_{Q,-} \oplus \h \oplus \n_{Q,+} .$$ By a celebrated result of Ringel and Green, one obtains that $$ \H_{\on{Rep}(Q,\mathbb{F}_q)} \simeq \mathbb{U}_{\sqrt{q}} (\n_{Q,+}), $$ where the right hand side denotes the quantized enveloping algebra of $\n_{Q,+}$. On the other hand, the author has shown that $$ \H_{\on{Rep}(Q,\mathbb{F}_1)} \simeq \mathbb{U} (\n_{Q,+})/ \mathcal{I} $$ where on the right hand side we now have a quotient of the \emph{ordinary} enveloping algebra by a certain ideal $\mathcal{I}$, which in type $A$ is trivial. This suggests the following (unfortunately, at this stage, very vague) principle: given a finitary abelian category $\C_{q}$ linear over $\mathbb{F}_q$ and $\C_1$ a candidate for its "limit over $\fun$", $ \H_{\C_1} $ is a (possibly degenerate) limit of $\H_{\C_q}$. 

This principle is borne out in another, more geometric example, when $\C_q = Coh(\mathbb{P}^1_{/\mathbb{F}_q})$. A result of Kapranov and Baumann-Kassel shows that in this case, there is an embedding
\[
\Psi: \mathbb{U}_{\sqrt{q}} (L \mathfrak{sl}^+_2) \rightarrow \H_{\C_q},
\] 
where $L \mathfrak{sl}^+_2$ denotes a non-standard Borel of the loop algebra of $\mathfrak{sl}_2$.  The category $\C_1 = Coh(\mathbb{P}^1_{/\fun})$ is constructed using a theory of schemes over $\fun$ and coherent sheaves on them developed by Deitmar and others (see \cite{D,D2, CHWW}). In this simplest version of algebraic geometry over $\fun$, schemes are glued out of prime spectra of commutative monoids rather than rings, and coherent sheaves are defined as sheaves of set-theoretic modules over the monoid structure sheaf. In \cite{Sz1}, the author shows that  
\[
\H_{\C_1} \simeq \mathbb{U}(L \mathfrak{gl}^+_2 \oplus \kappa)
\]
where $L \mathfrak{gl}^+_2$ denotes a Borel in the loop algebra of $\mathfrak{gl}_2$ and $\kappa$ is a certain abelian cyclotomic factor. Moreover, this calculation is much easier than the corresponding one over $\mathbb{F}_q$.

When $dim(X) \geq 2$ and $X$ possesses a model over $\fun$, one might then hope to use the Hall algebra of $\C_1 = Coh(X_{/\fun})$ to learn something about the more complicated Hall algebra of $Coh(X_{/\mathbb{F}_q})$. Naively, we may hope that the latter is a $q$-deformation of the former. This paper begins this program by focussing on the simplest piece of the category $\C_1$, that of torsion sheaves supported at a point. We recall that when $Y_{/\mathbb{F}_q}$ is a smooth curve and $y \in Y$ is a closed point, the category $Tor_y \subset Coh(Y)$ of coherent sheaves supported at $y$ is closed under extensions, and yields a Hopf sub-algebra of $\H_{Coh(Y)}$ isomorphic to the Hopf algebra $\Lambda$ of symmetric functions (see \cite{S}). It is shown in \cite{Sz1} that this result holds over $\mathbb{F}_1$ as well, provided one restricts to a certain sub-category of sheaves which in this paper are called "type $\alpha$", and which in \cite{Sz1} are called "normal". From this perspective, this paper deals with the $n$-dimensional generalization of the Hopf algebra of symmetric functions. 

The monoid scheme $\mathbb{A}^n_{/\fun}$ has a single closed point - the origin, and coherent sheaves supported there form a sub-category $Tor_0 \subset Coh(\mathbb{A}^n_{/\fun})$ closed under extensions. Such sheaves correspond to pointed sets $(M,0_M)$ with a nilpotent action of the free commutative monoid $\A=\Pn$ on $n$ generators $x_1, \cdots, x_n$, and form a category $\Amod_0$ for which we may define a Hall algebra $\H_{0, \A}$.  We consider a sub-category $\Amod^{\alpha, gr}_0 \subset \Amod$ of $\A$-modules supported at $0$, which are furthermore of type $\alpha$ and admit a $\mathbb{Z}^n$-grading. The type $\alpha$ condition is imposed to recover the Hopf algebra of symmetric functions when $n=1$, and the grading, which is automatic in the case $n=1$ is imposed to dispose of certain pathological examples. We show

\begin{theorem*}[\ref{indecomposable_modules_are_skew_shapes}]
The indecomposable objects of $\Amod^{\alpha, gr}_0$ correspond bijectively to $n$-dimensional connected skew shapes. 
\end{theorem*}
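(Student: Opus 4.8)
The plan is to set up the claimed bijection explicitly in both directions, letting the $\Zn$-grading carry most of the combinatorial bookkeeping, and then to verify that the two constructions are mutually inverse.

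\textbf{From modules to shapes.} First I would unpack the structure of an arbitrary object $M \in \Amod^{\alpha,gr}_0$. The $\Zn$-grading expresses $M = \bigvee_{v \in \Zn} M_v$ as a wedge of pointed sets in which each generator $x_i$ raises degree by $e_i$, so $x_i$ restricts to a map $M_v \setminus \{0\} \to M_{v+e_i} \cup \{0\}$; the type $\alpha$ hypothesis forces each $x_i$ to act by a partial injection, so these restricted maps are injective. I would then record the combinatorial data of $M$ as $\supp(M) = \{\, v : M_v \neq \{0\}\,\} \subset \Zn$ together with the translation maps. Nilpotence makes $\supp(M)$ finite, and up to an overall translation we may assume it lies in $\Zn_{\geq 0}$ and meets each coordinate hyperplane.

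\textbf{Indecomposability equals connectedness.} Next I would put a graph structure on the nonzero elements of $M$ by joining $m$ and $m'$ whenever $x_i m = m'$ for some $i$. The key step is to show that any partition of the vertex set into unions of connected components induces a direct-sum decomposition of $M$ in the ambient proto-exact category (each component-closed subset is a submodule, and $M$ is their coproduct), and conversely that every direct summand is a union of components; hence $M$ is indecomposable precisely when this graph is connected. Combining this with injectivity of each $x_i$, I would deduce that an indecomposable $M$ is multiplicity-free, i.e. no degree $v$ supports two distinct nonzero elements, since two such elements would necessarily lie in distinct components and force a splitting. Thus for indecomposable $M$ the data collapses to an honest connected subset $D(M) \subset \Zn$ with the canonical action $x_i \cdot v = v + e_i$ when $v + e_i \in D(M)$ and $x_i \cdot v = 0$ otherwise.

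\textbf{Matching with skew shapes, and the inverse map.} I would then verify that $D(M)$ satisfies exactly the axioms defining an $n$-dimensional connected skew shape: connectedness is the previous step, while the remaining convexity conditions are equivalent to the translation action being the canonical one compatible with type $\alpha$, which holds by construction. Conversely, given a connected skew shape $D$, I would define $M_D$ to have underlying pointed set $D \sqcup \{0\}$, grading induced by the inclusion $D \into \Zn$, and the translation action above, and check that $M_D$ is a well-defined nilpotent $\A$-module (the commutation relations $x_i x_j = x_j x_i$ hold because lattice translations commute), that it is type $\alpha$ and graded, and that it is indecomposable by the connectedness criterion. The identities $D(M_D) = D$ and $M_{D(M)} \cong M$ then exhibit the two assignments as mutually inverse.

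I expect the crux to be the identification in the last step of the algebraic constraints (type $\alpha$, compatibility with the grading, nilpotence, indecomposability) with the purely combinatorial skew-shape axioms, especially the proof of multiplicity-freeness and the verification that no configuration surviving the module-theoretic hypotheses fails to be a genuine skew shape. The second technical point requiring care is the direct-sum decomposition of the middle step: because the setting is non-additive, \emph{direct sum} must be read as the coproduct of pointed sets (a wedge along the basepoint) rather than as a vector-space decomposition, so the submodule-versus-component argument has to be phrased entirely within the proto-exact structure.
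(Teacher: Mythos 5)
Your reduction of indecomposability to connectedness of the transition graph (edges $m \to x_i\cdot m$) is correct: component-closed subsets are submodules, $M$ is the wedge of its components, and summands are unions of components. The genuine gap is in the two steps you yourself flag as the crux, because the justifications you offer for them are not proofs — they are restatements of what must be shown, and together they *are* the theorem. First, multiplicity-freeness: you claim two distinct elements of the same degree "would necessarily lie in distinct components," but this does not follow from injectivity of each $x_i$ separately. Two same-degree elements $m \neq m'$ could a priori be joined by a zigzag path through other degrees (e.g.\ elements $d \neq d'$ of equal degree with $x_1\cdot d$ and $x_2 \cdot d'$ both nonzero and eventually merging); what rules this out is not injectivity of the individual generators but the commutativity relations $x_ix_j = x_jx_i$ together with the type $\alpha$ condition applied to \emph{products} of generators (a merge as above forces $(x_1x_2)\cdot d = (x_1x_2)\cdot d' \neq 0$, violating type $\alpha$ for the monoid element $x_1x_2$), and in general this requires an inductive argument, not a one-line observation. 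Second, the claim that the support "satisfies the convexity conditions by construction" is likewise unproved: even granting multiplicity-freeness, you must show (i) that $x_i\cdot m_v \neq 0$ whenever $v$ and $v+e_i$ both lie in the support (a priori $x_i$ could act by zero even though the target degree is occupied), and (ii) that the support is order-convex. Neither holds "by construction"; both are forced by commutativity. For instance, the set $\{(0,0),(1,0),(1,1)\}$ with its only candidate action is \emph{not} a $\fun\langle x_1,x_2\rangle$-module precisely because $x_2x_1\cdot m_{(0,0)} \neq x_1x_2\cdot m_{(0,0)}$ — detecting and excluding such configurations is exactly the verification your outline omits.

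For comparison, the paper proves the statement by induction on $\dim(M)$: it peels off a socle element $v$ with $\mathfrak{m}\cdot v = 0$, decomposes $M/\langle v\rangle$ into indecomposables, applies the inductive hypothesis to write each as $M_{S_i}$ for connected skew shapes $S_i$, uses type $\alpha$ plus the grading to show at most $n$ summands can occur (at most one preimage of $v$ under each $x_i$, in pairwise distinct degrees), and then performs a geometric case analysis of how the shapes $S_i$ attach at $\deg(v)$. Multiplicity-freeness and convexity are not separate lemmas there — they come out of that case analysis. A corrected version of your direct approach would need to supply proofs of your two unproved claims (most plausibly by a similar induction, or by a careful path-straightening argument using commutativity and type $\alpha$ for arbitrary monomials), at which point it would no longer be shorter or more elementary than the paper's argument.
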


Here by a skew shape we mean a finite sub-poset of $\mathbb{Z}^n$ under the partial order where $(x_1, \cdots, x_n) \leq (y_1, \cdots, y_n)$  iff $x_i \leq y_i $ for $i=1, \cdots, n$, considered up to translation. Each skew shape determines a $\Pn$-module where $x_i$ acts by moving $1$ box in the positive $i$th direction. For example, when $n=2$, the skew shape
\begin{center}
\Ylinethick{1pt}
\gyoung(;,\bullet;;,:;\bullet;;)
\end{center}
determines a $\fun \langle x_1, x_2 \rangle$-module on two generators (indicated by black dots), where $x_1$ moves one box to the right, and $x_2$ one box up, until the edge of the diagram is reached, and $0$ beyond that. 

 We may now define the Hall algebra $\mathbf{SK}_n$ of $\Amod^{\alpha, gr}_0$ by counting \emph{admissible short exact sequences} in this category. This is a quotient of $\H_{0,\A}$ above. Paraphrased, our main result says:
 
\begin{theorem*}[\ref{skew_Hall_theorem}]
The Hall algebra $\mathbf{SK}_n$ of $\Amod^{\alpha, gr}_0$ is isomorphic to the enveloping algebra $\mathbb{U}(\skn)$ of a graded Lie algebra $\skn$. $\skn$ has a basis corresponding to connected $n$-dimensional skew shapes. Furthermore, $\skn$ and $\mathbf{SK}_n$ carries an action of the symmetric group $S_n$ by Lie (resp. Hopf) algebra automorphisms. 
\end{theorem*}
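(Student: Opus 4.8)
The plan is to realize $\SK_n$ as a connected, graded, cocommutative Hopf algebra and then invoke the Milnor--Moore theorem. First I would grade $\SK_n$ by the number of boxes (equivalently, the cardinality of the underlying pointed set), so that the degree-zero component is spanned by the zero module and $\SK_n$ is connected, with finite-dimensional graded pieces by \ref{indecomposable_modules_are_skew_shapes}. The associative ``stacking'' product is the Hall product counting admissible short exact sequences, and dually the coproduct $\Delta[M] = \sum [M'] \otimes [M/M']$ sums over admissible subobjects $M' \subseteq M$. The first substantive step is to verify the bialgebra axioms: coassociativity is formal, whereas compatibility of $\Delta$ with the product is the analogue of Green's theorem and requires a combinatorial identity relating the admissible subobjects of an extension $C$ of $A$ by $B$ to pairs of admissible subobjects of $A$ and of $B$. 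In the proto-exact setting of \cite{DK}, over $\fun$, this should reduce to a bijection of finite sets rather than a $q$-weighted count; connectedness and gradedness then supply an antipode automatically.

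Next I would establish cocommutativity, the feature distinguishing the $\fun$ situation from the $\mathbb{F}_q$ one. Since the objects of $\Amod^{\alpha,gr}_0$ carry no nontrivial $q$-automorphisms, I expect the coproduct structure constants to be symmetric, i.e. the number of admissible subobjects of $M$ isomorphic to $A$ with quotient $B$ equals the number with the roles of $A$ and $B$ reversed. I would prove this directly from the combinatorics of skew shapes and the admissibility condition. With cocommutativity in hand, the Milnor--Moore theorem (in characteristic zero, working over $\mathbb{Q}$) yields $\SK_n \cong \mathbb{U}(\mathcal{P})$, where $\mathcal{P}$ is the graded Lie algebra of primitive elements equipped with the commutator of the stacking product; I then \emph{define} $\skn := \mathcal{P}$.

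It remains to identify a homogeneous basis of $\skn$ with connected skew shapes. Here I would argue by a dimension count rather than a literal identification: as the case $n=1$ with $\SK_1 \cong \Lambda$ already shows, the class $[M]$ of an indecomposable need not itself be primitive (e.g. a length-two chain has $\Delta[M] = [M]\otimes 1 + [M']\otimes[M'] + 1\otimes[M]$). By Krull--Schmidt together with \ref{indecomposable_modules_are_skew_shapes}, every object is uniquely a direct sum of indecomposables, so the Hilbert series of $\SK_n$ equals $\prod_{S}(1 - t^{\deg S})^{-1}$, the product taken over connected skew shapes $S$. On the other hand, PBW gives the Hilbert series of $\mathbb{U}(\mathcal{P})$ as $\prod_{b}(1 - t^{\deg b})^{-1}$ over a homogeneous basis of $\mathcal{P}$. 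Comparing the two forces $\dim \mathcal{P}_d$ to equal the number of connected skew shapes of degree $d$, which yields the asserted bijection of bases.

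Finally, for the $S_n$-action I would let $\sigma \in S_n$ permute the generators $x_1, \dots, x_n$ of $\A = \Pn$, equivalently permute the coordinate axes of $\Zn$. This is an autoequivalence of $\Amod^{\alpha,gr}_0$ preserving direct sums and admissible short exact sequences, hence induces a graded Hopf algebra automorphism of $\SK_n$; since automorphisms preserve primitives, it restricts to a Lie algebra automorphism of $\skn$, and on diagrams it is the evident relabelling of axes, which manifestly preserves connectedness. The main obstacle I anticipate is the bialgebra/cocommutativity step: making the Hall-theoretic formalism rigorous in this non-additive, proto-exact category --- pinning down the admissibility condition so that the requisite finiteness holds and so that both the Green-type compatibility and the subobject/quotient symmetry go through --- is where the real work lies, with the Milnor--Moore application and the generating-function count being comparatively formal.
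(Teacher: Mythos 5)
There is a genuine gap, and it is fatal to the route you chose: you have equipped $\SK_n$ with the wrong coproduct. The coproduct you propose, $\Delta(\delta_{\ol{M}})=\sum_{N\subset M}\delta_{\ol{N}}\otimes\delta_{\ol{M/N}}$, is the one dual to the Hall product; in the paper this formula lives on the Hopf \emph{dual} (equation (\ref{dual_coprod}) in Section \ref{Hopf_duals}), not on $\SK_n$ itself. The coproduct actually used on $\H_{\A}$ and hence on $\SK_n$ is the ``split'' coproduct $\Delta(f)(\ol{M},\ol{N})=f(\ol{M\oplus N})$ of (\ref{Hall_coprod}), which records only direct-sum decompositions. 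The distinction is not cosmetic: your coproduct has the same structure constants $\mathbf{P}^{R}_{M,N}$ as the product, so its cocommutativity is \emph{equivalent} to commutativity of the stacking product, and that product is non-commutative for $n\geq 2$ --- the whole point of the theorem is that $\skn$ is a non-abelian Lie algebra. Consequently the subobject/quotient symmetry you say you expect to prove (``the number of admissible subobjects of $M$ isomorphic to $A$ with quotient $B$ equals the number with the roles reversed'') is false. Concretely, take $n=2$ and $S=\{(0,0),(1,0),(0,1)\}$. By Lemma \ref{shapes_modules}, submodules of $M_S$ correspond to sub-posets whose complement is an order ideal, so the proper nonzero submodules are $M_{\{(1,0)\}}$, $M_{\{(0,1)\}}$, and the disconnected $M_{\{(1,0),(0,1)\}}$. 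Thus $M_S$ has exactly one submodule isomorphic to a single box with quotient the horizontal domino $\{(0,0),(1,0)\}$, but no submodule isomorphic to a horizontal domino at all. So your $\Delta$ is not cocommutative, Milnor--Moore cannot be invoked for it, and the Hilbert-series/PBW comparison that was to produce the basis of $\skn$ has nothing to rest on.

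The paper's proof avoids all of this: it takes the split coproduct, which is cocommutative for trivial reasons ($M\oplus N\simeq N\oplus M$); compatibility with the Hall product is the bialgebra theorem quoted from \cite{Sz2}; and the type $\alpha$, graded, and support conditions cut out $\SK_n=\H^{\alpha,gr}_{0,\A}$ as a graded connected cocommutative Hopf algebra via Theorem \ref{alpha_ideal} and Proposition \ref{Hall_with_support}. With this coproduct the primitive elements are precisely the $\delta_{\ol{M}}$ with $M$ indecomposable (for indecomposable $M$ the only splittings are $M\oplus 0$ and $0\oplus M$), so Theorem \ref{indecomposable_modules_are_skew_shapes} identifies a literal basis of $\skn$ with connected skew shapes; no dimension count is needed, and your remark that indecomposables fail to be primitive is an artifact of the wrong coproduct (with the correct one, the two-box ladder \emph{is} primitive). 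Your intuition was calibrated on the case $n=1$, where $\SK_1\simeq\Lambda$ happens to be commutative and self-dual, so both coproducts yield Hopf structures; for $n\geq 2$ only the split one does. The one part of your argument that survives unchanged is the last: the $S_n$-action by permuting $x_1,\dots,x_n$ induces Hopf algebra automorphisms and restricts to the primitives, which is exactly the paper's argument.
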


The associative product in $\mathbf{SK}_n$ corresponds to a "stacking" operation on skew shapes.

\subsection{Future directions} 

By a result of Deitmar, It is known that every smooth proper toric variety $X_{/\mathbb{Z}}$ has a model over  $\fun$ (\cite{D2}) I.e. there exists a monoid scheme $X'_{/\fun}$ such that
\[
X \simeq X' \otimes_{\fun} \mathbb{C}
\]
The approach taken here to compute the Hall algebra of normal point sheaves may be extended to construct the full Hall algebra of normal coherent sheaves on $X$ purely in terms of the combinatorics of the fan of $X$ and n-dimensional partitions.  For instance, a coherent sheaf on $\mathbb{A}^3$ which is type $\alpha$, graded, and supported on the union of the coordinate axes may be visualized in terms of a $3$-D asymptotic partition like:
\begin{center}
\includegraphics[scale=0.8]{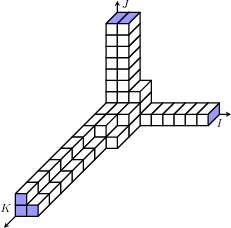}
\end{center}

This Hall algebra is once again a co-commutative Hopf algebra, and contains several copies of the Hopf algebra $\mathbf{SK}_n$ above, one for each closed point of $X'_{/\fun}$ (here $n=dim(X)$), as well as larger Hopf subalgebras corresponding to various co-dimension subvarieties of $X'{/\fun}$. This theme will be taken up in future papers \cite{Sz5, Sz6}.

\subsection{Outline}

This paper is structured as follows:

\begin{enumerate}
\item In Section \ref{MM} we recall basic properties of the category $\Amod$ of (set-theoretic) finite modules over a monoid $\A$. We discuss what is meant by exact sequences in this non-additive category, normal morphisms, as well as additional conditions which lead to sub-categories of $\Amod$ - type $\alpha$, graded, and modules with prescribed support.  
\item In Section \ref{schemes_sheaves} we briefly sketch the theory of schemes over $\fun$ via monoid schemes as introduced by Deitmar. This is done to allow a geometric interpretation of our Hall algebra in terms of coherent sheaves on $\mathbb{A}^n_{/\fun}$ supported at the origin.
\item In Section \ref{Hall_Alg} we recall the construction of the Hall algebra $\H_{\A}$ of $\Amod$, defined by counting admissible short exact sequences. We show that imposing one or several of the the conditions discussed in \ref{MM} (type $\alpha$, admitting a grading, prescribed support), yields Hopf ideals in $\H_{\A}$. $\H_{\A}$ and its various quotients by these ideals are isomorphic to enveloping algebras of Lie algebras with basis the corresponding indecomposable modules. 
\item In Section \ref{skew_torsion}, we explain the bijection between $n$-dimensional skew shapes and type $\alpha$ modules over $\Pn$ supported at $0$ and admitting a $\mathbb{Z}^n$-grading.  
\item In Section \ref{Hopf_skew} we make explicit the Lie and Hopf structures on skew shapes which emerge from the Hall algebra construction. 
\item In Section \ref{Hopf_duals} we outline structure of the Hopf duals of the Hall algebras introduced previously. 
\end{enumerate}

\noindent{\bf Acknowledgements:} The author gratefully acknowledges the support of a Simons Foundation Collaboration Grant during the writing of this paper. 

\section{Monoids and their modules} \label{MM}

A \emph{monoid} $\A$ will be an associative semigroup with identity $1_A$ and zero $0_A$ (i.e. the absorbing element). We require
\[
 1_A \cdot a = a \cdot 1_A = a \hspace{1cm} 0_A \cdot a = a \cdot 0_A = 0_A \hspace{1cm} \forall a \in A
\]
Monoid homomorphisms are required to respect the multiplication as well as the special elements $1_A, 0_A$.


\begin{example}
Let $\fun = \{ 0, 1\}$ with $$ 0 \cdot 1 = 1 \cdot 0 = 0 \cdot 0 = 0 \textrm{ and } 1 \cdot 1 = 1 .$$ We call $\fun$ \emph{the field with one element}. 
\end{example}

\begin{example}
Let $$\Pn := \{ x^{e_1}_1 x^{e_2}_2 \cdots x^{e_n}_n \vert (e_1, e_2, \cdots, e_n) \in \Zplus^n \} \cup \{ 0 \},$$ the set of monomials in $x_1, \cdots, x_n$, with usual multiplication of monomials. We will often write elements of $\Pn$ in multiindex notation as $x^e, e \in \Zplus^n$, in which case the multiplication is written as $$ x^e \cdot x^f = x^{e+f} .$$ We identify $x^0$ with $1$. 
\end{example}

$\fun$ and $\Pn$ are both commutative monoids. 

\begin{definition}
Let $\A$ be a monoid. An \emph{$\A$--module} is a pointed set $(M,0_M)$ (with $0_M \in M$ denoting the basepoint), equipped with an action of $\A$. More explicitly, an $\A$--module structure on $(M, 0_M)$ is given by a map
\begin{align*}
\A \times M  & \rightarrow M \\
(a, m) & \rightarrow a \cdot m
\end{align*}
satisfying 
\[
(a \cdot b)\cdot m = a \cdot (b \cdot m), \; \; \; 1 \cdot m = m, \; \; \; 0 \cdot m = 0_M, \; \; \; a \cdot 0_M = 0_M, \; \; \forall a,b, \in \A, \; m \in M
\]
\end{definition}
A \emph{morphism} of $\A$--modules is given by a pointed map $f: M \rightarrow N$ compatible with the action of $\A$, i.e. $f(a \cdot m) = a \cdot f(m)$. 
The $\A$--module $M$ is said to be \emph{finite} if $M$ is a finite set, in which case we define its \emph{dimension} to be $dim(M) = \vert M \vert -1$ (we do not count the basepoint, since it is the analogue of $0$). We say that $N \subset M$ is an \emph{$\A$--submodule} if it is a (necessarily pointed) subset of $M$ preserved by the action of $\A$. $\A$ always posses the trivial module $\{0\}$, which will be referred to as the \emph{zero module}.
\medskip

\noindent {\bf Note:} This structure is called an \emph{$\A$-act} in \cite{KKM} and an \emph{$\A$-set} in \cite{CLS}. 

\medskip

We denote by $\Amod$ the category of finite $\A$--modules. It is the $\fun$ analogue of the category of a finite-dimensional representations of an algebra. Note that for $M \in \Amod$, $\on{End}_{\Amod}(M) := \on{Hom}_{\Amod}(M,M)$ is a monoid (in general non-commutative). An $\fun$--module is simply a pointed set, and will be referred to as a vector space over $\fun$. Thus, an $\A$--module structure on $M \in \funmod$ amounts to a monoid homomorphism $A \rightarrow \on{End}_{\funmod}(M)$. 

\medskip

Given a morphism $f: M \rightarrow N$ in $\Amod$, we define the \emph{image} of $f$ to be $$Im(f) := \{ n \in N \vert \exists m \in M, f(m) = n \}.$$ For $M \in \Amod$ and an $\A$--submodule $N \subset M$, the \emph{quotient} of $M$ by $N$, denoted $M/N$ is the $\A$--module $$ M/N :=  M \backslash N \cup \{0 \}, $$ i.e. the pointed set obtained by identifying all elements of $N$ with the base-point, equipped with the induced $\A$--action. 

\bigskip

We recall some properties of $\Amod$,  following \cite{KKM, CLS, Sz2}, where we refer the reader for details:

\medskip

\begin{enumerate}
\item For $M,N \in \Amod$, $\vert Hom_{\Amod}(M,N) \vert < \infty$ \label{part1}
\item The trivial $\A$--module $0$ is an initial, terminal, and hence zero object of $\Amod$. 
\item Every morphism $f: M \rightarrow N$ in $C_A$ has a kernel $Ker(f):=f^{-1}(0_N)$.
\item  Every morphism $f: M \rightarrow N$ in $C_A$ has a cokernel $Coker(f):=M/Im(f)$. 
\item The co-product of a finite collection $\{ M_i \}, i \in I$ in $\Amod$ exists, and is given by the wedge product
$$
\bigvee_{i \in I} M_i = \coprod M_i / \sim
$$
where $\sim$ is the equivalence relation identifying the basepoints. We will denote the co-product of $\{M_i \}$ by $$\oplus_{i \in I} M_i$$
\item The product of a finite collection $\{ M_i \}, i \in I$ in $\Amod$ exists, and is given by the Cartesian product $\prod M_i$, equipped with the diagonal $\A$--action. It is clearly associative. it is however not compatible with the coproduct in the sense that $M \times (N \oplus L) \nsimeq M \times N \oplus M \times L$.
\item The category $\Amod$ possesses a reduced version $M \wedge N$ of the Cartesian product $M \times N$, called the smash product. $M \wedge N := M \times N / M \vee N$, where $M$ and $N$ are identified with the $\A$--submodules $\{ (m,0_N) \}$ and $\{(0_M,n)\}$ of $M \times N$ respectively. The smash product inherits the associativity from the Cartesian product, and is compatible with the co-product - i.e. $$M \wedge (N \oplus L) \simeq M \wedge N \oplus M \wedge L.$$
\item $\Amod$ possesses small limits and co-limits. 
\item If $\A$ is commutative, $\Amod$ acquires a monoidal structure called the \emph{tensor product}, denoted $M \otimes_{\A} N$, and defined by
\[
M \otimes_{\A} N := M \times N / \sim_{\otimes}
\]
where $\sim_{\otimes}$ is the equivalence relation generated by $(a \cdot m, n) \sim_{\otimes} (m, a \cdot n)$ for all $a \in \A, m \in M, n \in N$. Note that $(0_M,n) = (0 \cdot 0_M, n) \sim_{\otimes} (0_M, 0 \cdot n) = (0_M,0_N)$, and likewise $(m,0_N) \sim_{\otimes} (0_M,0_N)$. This allows us to rewrite the tensor product as $M \otimes_{\A} N = M \wedge N/ \sim_{\otimes'}$, where $\sim_{\otimes'}$ denotes the equivalence relation induced on $M \wedge N$ by $\sim_{\otimes}$. We have 
\begin{align*}
M \otimes_{\A} N  & \simeq N \otimes_{\A} M, \\  (M \otimes_{\A} N) \otimes_{\A} L & \simeq M \otimes_{\A} (N \otimes_{\A} L), \\ \ M \otimes_{\A} (L \oplus N) & \simeq (M \otimes_{\A} L) \oplus (M \otimes_{\A} N). 
\end{align*}
\item Given $M$ in $\Amod$ and $N \subset M$, there is an inclusion-preserving correspondence between flags $N \subset L \subset M$ in $\Amod$ and $\A$--submodules of $M/N$ given by sending $L$ to $L/N$. The inverse correspondence is given by sending $K \subset M/N$ to $\pi^{-1} (K)$, where $\pi: M \rightarrow M/N$ is the canonical projection. This correspondence has the property that if $N \subset L \subset L' \subset M$, then $(L'/N)/(L/N) \simeq L'/L$.  \label{property9}
\end{enumerate}

\medskip

\noindent These properties suggest that $\Amod$ has many of the properties of an abelian category, without being additive. It is an example of a \emph{quasi-exact} and \emph{belian} category in the sense of Deitmar \cite{D3} and a \emph{proto-exact} category in the sense of Dyckerhof-Kapranov \cite{DK}. Let $\on{Iso}(\Amod)$ denote the set of isomorphism classes in $\Amod$, and by $\ol{M}$ the isomorphism class of $M \in \Amod$.

\bigskip

\begin{definition}

\begin{enumerate}
\item We say that $M \in \Amod$ is \emph{indecomposable} if it cannot be written as $M = N \oplus L$ for non-zero $N, L \in \Amod$. 
\item We say $M \in \Amod$ is \emph{irreducible} or \emph{simple} if it contains no proper sub-modules (i.e those different from $0$ and $M$). 
\end{enumerate}
\end{definition}

It is clear that every irreducible module is indecomposable. 
We have the following analogue of the Krull-Schmidt theorem (\cite{Sz2}): 

\begin{proposition}
Every $M \in \Amod$ can be uniquely decomposed (up to reordering) as a direct sum of indecomposable $\A$--modules. 
\end{proposition}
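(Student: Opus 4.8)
The plan is to prove the existence and uniqueness of the decomposition into indecomposables, mirroring the classical Krull--Schmidt argument but using only the non-additive structure available in $\Amod$. \emph{Existence} is the easy half: given a finite module $M$, if it is indecomposable we are done; otherwise write $M = N \oplus L$ with $N, L$ nonzero, and since $\dim(N), \dim(L) < \dim(M)$ (the wedge-sum of pointed sets is additive on cardinalities minus one), induct on $\dim(M)$. This terminates because dimensions are strictly decreasing nonnegative integers.

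The substance is in \emph{uniqueness}. First I would establish the correct elementary structure of direct sum decompositions in this pointed-set setting. The key observation is that a decomposition $M = \bigoplus_{i} M_i$ is the same data as a partition of the nonbasepoint elements $M \setminus \{0_M\}$ into $\A$-invariant subsets that are \emph{closed} in the appropriate sense: each $M_i$ is a submodule, and every element of $M$ lies in exactly one summand. Concretely, I would define an equivalence relation on $M \setminus \{0_M\}$ by declaring $m \sim m'$ if they lie in a common indecomposable summand, and argue that the indecomposable summands are precisely the submodules generated by the $\sim$-classes together with $0_M$. This reduces the problem to showing that the relation generating connected components is intrinsic to $M$ and independent of the chosen decomposition.

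To make this precise, I would introduce the finest relation: say two elements $m, m'$ are \emph{linked} if there is a chain $m = p_0, p_1, \dots, p_k = m'$ where consecutive elements are related by $a \cdot p_j = p_{j+1}$ or $a \cdot p_{j+1} = p_j$ for some $a \in \A$ (ignoring the basepoint). This linking relation is manifestly defined purely in terms of the $\A$-action on $M$, hence independent of any decomposition. I would then show two things: (i) in any decomposition $M = \bigoplus_i M_i$ into indecomposables, each $M_i \setminus \{0_M\}$ is a union of linking-classes, and (ii) each indecomposable summand consists of exactly one linking-class (otherwise the class-partition would split $M_i$ further, contradicting indecomposability). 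Together these identify the summands canonically with the linking-classes, giving uniqueness up to reordering, since the bijection between summands and classes is forced.

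The main obstacle I anticipate is verifying step (ii) carefully: I must confirm that if $M_i \setminus \{0_M\}$ splits into more than one linking-class, then grouping the classes yields a genuine nontrivial direct-sum decomposition $M_i = P \oplus Q$ as $\A$-modules, not merely a partition of sets. This requires checking that each union of linking-classes (with $0_M$ adjoined) is actually $\A$-invariant --- which follows because the defining relations of linking are exactly the relations $a \cdot p = p'$, so the $\A$-action cannot cross between distinct classes except through the basepoint --- and that the wedge of these pieces recovers $M_i$. Once invariance is secured, indecomposability forces a single class, and the argument closes. Unlike the additive Krull--Schmidt theorem, no Fitting-lemma or endomorphism-ring argument is needed here; the rigidity of pointed sets makes the decomposition literally a connected-components decomposition, which is why uniqueness holds on the nose rather than merely up to isomorphism of summands.
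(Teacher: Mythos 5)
Your proof is correct, and it is essentially the argument this result rests on: the paper itself does not prove the proposition but quotes it from \cite{Sz2}, where the point is exactly the one you make --- internal direct sum decompositions of $M$ correspond to partitions of $M \setminus \{0_M\}$ into subsets $C$ such that $C \cup \{0_M\}$ is a submodule, and the finest such partition is given by the connected components of the action graph (your linking classes), which are intrinsic to $M$. Your two verifications are the right ones: chains cannot cross between summands since distinct summands meet only at $0_M$ and are closed under the action, and a union of linking classes plus $0_M$ is automatically a submodule, so indecomposability forces a single class. Note also that your argument gives the stronger conclusion that the indecomposable summands are unique as \emph{subsets} of $M$, not merely up to isomorphism; this is consistent with the paper's remark that any submodule $N \subset \oplus_i M_i$ satisfies $N = \oplus_i (N \cap M_i)$.
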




\begin{rmk} \label{subobj}

Suppose $M = \oplus^k_{i=1} M_i$ is the decomposition of an $\A$--module into indecomposables, and $N \subset M$ is a submodule. It then immediately follows that $N = \oplus (N \cap M_i)$. 

\end{rmk}




Given a ring $R$, denote by $R[\A]$ the semigroup ring of $\A$ with coefficients in $R$.  We obtain a base-change functor: 
\begin{equation}
\otimes_{\fun} R : \Amod \rightarrow R[\A]-mod
\end{equation}
to the category of $R[\A]$--modules defined by setting
\[
M \otimes_{\fun} R := \bigoplus_{m \in M, m \neq 0_M} R \cdot m 
\]
i.e. the free $R$--module on the non-zero elements of $M$, with the $R[\A]$--action induced from the $\A$--action on $M$.

\subsection{Exact Sequences}

A diagram $M_1 \overset{f}{\rightarrow} M_2 \overset{g}{\rightarrow} M_3 $ in $\Amod$ is said to be \emph{exact at $M_2$} if $Ker(g) = Im(f)$. A sequence of the form $$ 0 \rightarrow M_1 \rightarrow M_2 \rightarrow M_3 \rightarrow 0 $$ is a \emph{short exact sequence} if it is exact at $M_1, M_2$ and $M_3$. 

One key respect in which $\Amod$ differs from an abelian category is he fact that given a morphism $f: M \rightarrow N$, the induced morphism $M/Ker(f) \rightarrow Im(f)$ need not be an isomorphism. This defect also manifests itself in the fact that the base change functor $\otimes_{\fun} R : \Amod \rightarrow R[\A]-mod$ fails to be exact (i.e. the base change of a short exact sequence is in general no longer short exact). $\Amod$ does however contain a (non-full) subcategory which is well-behaved in this sense, and which we proceed to describe. 

\begin{definition}
A morphism $f: M \rightarrow N$ is \emph{normal} if every fibre of $f$ contains at most one element, except for the fibre $f^{-1}(0_N)$ of the basepoint $0_N \in N$. 
\end{definition}

It is straightforward to verify that this condition is equivalent to the requirement that the canonical morphism $M/Ker(f) \rightarrow Im(f)$ be an isomorphism, and that the composition of normal morphisms is normal. 

\begin{definition}
Let $\Anmod$ denote the subcategory of $\Amod$ with the same objects as $\Amod$, but whose morphisms are restricted to the normal morphisms of $\Amod$. A short exact sequence in $\Anmod$ is called \emph{admissible}.
\end{definition}

\begin{rmk} In contrast to $\Amod$, $\Anmod$ is typically neither (small) complete nor co-complete. However, $\otimes_{\fun} R$ is exact on $\Anmod$ for any ring $R$, in the sense that this functor takes admissible short exact sequences in $\Amod$ to short exact sequences in the Abelian category $R[\A]-mod$. Note that $\on{Iso}(\Amod) = \on{Iso}(\Anmod)$, since all isomorphisms are normal. 
\end{rmk}

The following simple result is proved in \cite{Sz2}:

\begin{proposition} \label{finitary} Let $\A$ be a monoid and $\Anmod$ as above.
\item Suppose $\A$ is finitely generated. For $M,N \in \Anmod$, there are finitely many admissible short exact sequences 
\begin{equation} \label{ses2}
0 \rightarrow M \overset{f}{\rightarrow} L \overset{g}{\rightarrow} N \rightarrow 0
\end{equation}
up to isomorphism. 
\end{proposition}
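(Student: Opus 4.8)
The plan is to reduce the finiteness claim to the observation that there are only finitely many $\A$-module structures on a finite pointed set of fixed cardinality, which in turn relies on $\A$ being finitely generated.

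First I would pin down the cardinality of the middle term $L$. In the admissible sequence \eqref{ses2}, exactness at $M$ forces $\on{Ker}(f)=0$, so $f$ is injective; as a normal morphism it then restricts to an isomorphism $M \xrightarrow{\sim} \on{Im}(f)$, whence $\abs{\on{Im}(f)}=\abs{M}$. Exactness at $L$ gives $\on{Ker}(g)=\on{Im}(f)$, and normality of $g$ together with exactness at $N$ identifies the quotient $L/\on{Im}(f)\cong N$. Since forming this quotient collapses the $\abs{M}$ elements of $\on{Im}(f)$ to a single basepoint, we get $\abs{L}-\abs{M}+1=\abs{N}$, that is $\abs{L}=\abs{M}+\abs{N}-1$, equivalently $\dim L=\dim M+\dim N$. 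Thus every middle term of an admissible sequence with the prescribed outer terms has the same fixed finite cardinality $d:=\abs{M}+\abs{N}-1$.

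Next I would count the possible module structures. An $\A$-module structure on a pointed set $(L,0_L)$ is the same datum as a monoid homomorphism $\A \to \on{End}_{\funmod}(L)$, and $\on{End}_{\funmod}(L)$ — the monoid of basepoint-preserving self-maps of a $d$-element pointed set — is finite. Choosing generators $a_1,\ldots,a_k$ of $\A$, such a homomorphism is completely determined by the images of the $a_i$, so there are at most $\abs{\on{End}_{\funmod}(L)}^k<\infty$ module structures on a set of cardinality $d$. Consequently there are only finitely many isomorphism classes of $\A$-modules $L$ with $\dim L=\dim M+\dim N$. Finally, for each such $L$ the maps $f$ and $g$ are basepoint-preserving maps among the finite sets underlying $M,L,N$, of which there are finitely many; hence there are finitely many triples $(L,f,g)$ forming a sequence \eqref{ses2}, and a fortiori finitely many such sequences up to isomorphism.

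The cardinality bookkeeping of the first step is essentially automatic once normality of $f$ and $g$ is invoked, so the real content sits at the counting step, and the one point that genuinely requires care is that finite generation of $\A$ is used there in an essential way: without it a homomorphism $\A \to \on{End}_{\funmod}(L)$ could require specifying infinitely many independent values, and the number of module structures even on the fixed finite set $L$ need not be finite. I therefore expect the finite-generation hypothesis, rather than any exactness subtlety, to be the crux of the argument.
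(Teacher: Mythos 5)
Your proof is correct, and it is essentially the standard argument: the paper itself defers this proposition to \cite{Sz2}, where the proof proceeds exactly along your lines — normality of $f$ and $g$ pins the middle term down to a pointed set of cardinality $\abs{M}+\abs{N}-1$, and finite generation of $\A$ bounds the number of module structures (monoid homomorphisms $\A \to \on{End}_{\funmod}(L)$ determined by images of the generators) and of the maps $f,g$. You also correctly identify finite generation as the essential hypothesis, so there is nothing to add.
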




\begin{definition}
Let $\A$ be a monoid. The Grothendieck Group of $\A$ is $$ K_0 (\A) := \mathbb{Z}[\overline{M}] / J , \; \; \;  \ol{M} \in \on{Iso}(\Anmod), $$ where $J$ is the subgroup generated by $\ol{L} - \ol{M} - \ol{N}$ for all admissible short exact sequences (\ref{ses2}). 
\end{definition}

\subsection{Modules of type $\alpha$ and graded modules} \label{special1}

\subsubsection{Modules of type $\alpha$}

Given a monoid $\A$ and $M \in \Amod$, we may for each $a \in \A$ consider the morphism 
$$ \phi_a : M \rightarrow M $$ $$\phi_a(m) = a \cdot m$$ in $\funmod$ (note that $\phi_a$ is not a morphism in $\Amod$ unless $\A$ is commutative). 

\begin{definition}
We say that $M$ is of \emph{type} $\alpha$ if $\forall a \in \A$, the morphism $\phi_a: M \rightarrow M$ is a normal morphism in $\funmod$. 
\end{definition}

Equivalently, if $M$ is type $\alpha$, then 
\[
a \cdot m_1 = a \cdot m_2 \iff m_1 = m_2 \textrm{ OR } a \cdot m_1 = a \cdot m_2 =0 
\]
for $a \in \A$, $m_1, m_2 \in M$. 

\begin{example}
Let $\A = \fun \langle t \rangle$. An $\A$-module is simply a vector space $M$ over $\fun$ with an endomorphism $t \in \on{End}_{\funmod}(M)$. M is thus type $\alpha$ as an $\A$--module iff $t$ is a normal morphism, i.e. if $t \vert_{M \backslash t^{-1}(0_m)}$ is injective. Thus, if $M=\{0,a,b,c \} $, and the action of $t$ is defined by
\[
t(a) =0, \; \; t(b) = a, \; \; t(c) = b,
\]
then $M$ is type $\alpha$. If however, we define the action of $t$ by
\[
t(a)=b=t(b), \; \; t(c)=0,
\]
then $M$ is not of type $\alpha$. 
\end{example}

\begin{rmk}
Let $M$ be $\A$-module, and $a \in \A$, and $\phi_a$ as above. Let $M \otimes_{\fun} \mathbb{Z}$ be the $\mathbb{Z}[A]$-module obtained via base change. The action of $\phi_a$ on $M$ extends to an endomorphism $\phi^{\mathbb{Z}}_a$ of $M \otimes_{\fun} \mathbb{Z}$ which can be represented as a square $dim(M) \times dim(M)$ matrix, having all entries either $0$ or $1$, and such that each column has at most one $1$. The condition that $M$ is type $\alpha$ implies that each row also contains at most one $1$ (i.e. that $\phi_a$ is a partial permutation matrix). Note that we always have the inclusion $$Ker(\phi_a) \otimes_{\fun} \mathbb{Z} \subset Ker(\phi^{\mathbb{Z}}_a) \; \forall a \in \A.$$
$M$ being type $\alpha$ is equivalent to the requirement that it be an isomorphism $\forall a \in \A$. 
\end{rmk}

The proof of the following proposition is straightforward:

\begin{proposition} Let $\A$ be monoid. \label{normal_properties}
\begin{enumerate}
\item If $M$ is an $\A$-module of type $\alpha$ and $N \subset M$ is a submodule, then both $N$ and $M/N$ are type $\alpha$. 
\item If $M_1, M_2$ are $\A$-modules of type $\alpha$, then so are $M_1 \oplus M_2$ and $M_1 \wedge M_2$.
\item If $\A$ is commutative, and $M_1, M_2$ are $\A$-modules of type $\alpha$, then so is $M_1 \otimes_{A} M_2$. 
\end{enumerate}
\end{proposition}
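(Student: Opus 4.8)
The plan is to verify, in each case, that for an arbitrary $a \in \A$ the multiplication map $\phi_a$ is again normal in $\funmod$, using the reformulation of type $\alpha$ recorded after the definition: a module $M$ is type $\alpha$ precisely when, for all $a \in \A$ and all $m_1, m_2 \in M$, the equality $a \cdot m_1 = a \cdot m_2 \ne 0_M$ forces $m_1 = m_2$. In other words, type $\alpha$ is a \emph{cancellation} property for the $\funmod$-endomorphisms $\phi_a$, and each part below amounts to checking that this cancellation is inherited by the relevant construction.

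For part (1), a submodule $N \subset M$ carries the restricted maps $\phi_a|_N$, and any instance of $a\cdot n_1 = a\cdot n_2 \ne 0$ with $n_1, n_2 \in N$ is in particular such an instance in $M$, so cancellation in $M$ gives $n_1 = n_2$; thus $N$ is type $\alpha$. For the quotient, recall that $M/N = (M \setminus N) \cup \{0\}$ as a pointed set, with $a \cdot \ol{m} = \ol{a \cdot m}$. If $a \cdot \ol{m_1} = a \cdot \ol{m_2} \ne 0$ in $M/N$, then neither $a\cdot m_1$ nor $a \cdot m_2$ lies in $N$ and they represent the same nonzero class, so $a\cdot m_1 = a\cdot m_2 \ne 0_M$ in $M$; cancellation gives $m_1 = m_2$, hence $\ol{m_1} = \ol{m_2}$. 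Part (2) is equally direct: on $M_1 \oplus M_2 = M_1 \vee M_2$ the action preserves each wedge summand, so a relation $a\cdot x = a \cdot y \ne 0$ forces $x,y$ into the common summand determined by the nonzero value, where cancellation applies; on the smash product $M_1 \wedge M_2$ the action is the reduced diagonal one, and $a\cdot(m_1,m_2) = a\cdot(m_1',m_2') \ne 0$ forces $a\cdot m_i \ne 0_{M_i}$ and $a\cdot m_i = a\cdot m_i'$ coordinatewise, so cancellation in each factor gives $(m_1,m_2) = (m_1',m_2')$.

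Part (3) is where I expect the real work to lie, and it cannot be reduced to part (1): although $M_1 \otimes_{\A} M_2$ is a quotient of the type $\alpha$ module $M_1 \wedge M_2$ (by part (2)), the quotient is taken by the equivalence relation $\sim_{\otimes'}$, which \emph{identifies non-basepoint elements with one another} rather than merely collapsing a submodule to $0$; hence the stability established in part (1) does not apply. The plan is again to verify cancellation for $\phi_b$, $b \in \A$: after noting that every nonzero class has a representative $(m_1,m_2)$ with both coordinates nonzero, one must show that $b\cdot[m_1,m_2] = b\cdot[m_1',m_2'] \ne 0$, i.e. $(b\cdot m_1, m_2) \sim_{\otimes} (b\cdot m_1', m_2')$, implies $(m_1,m_2) \sim_{\otimes} (m_1',m_2')$. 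The hard part is that the equivalence between $(b\cdot m_1, m_2)$ and $(b\cdot m_1', m_2')$ is witnessed by a \emph{zig-zag} of elementary moves $(c\cdot u, v) \leftrightarrow (u, c\cdot v)$, and along such a chain the distinguished factor $b$ may be shuttled back and forth between the two coordinates. My proposed remedy is to exploit type $\alpha$ as \emph{unique divisibility}: for type $\alpha$ modules the equation $c \cdot u = w \ne 0_M$ has at most one solution $u$, so each elementary move is invertible on nonzero elements, and I would argue by induction on the length of the chain, tracking at each pair in the zig-zag a canonical "$b$-predecessor" in the first or second coordinate, to produce a parallel chain relating $(m_1,m_2)$ to $(m_1',m_2')$.

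As a consistency check and possible alternative route, I would also pass to the base change $\otimes_{\fun} k$ over a field $k$, under which a type $\alpha$ module becomes a vector space on which every $\phi_a$ acts by a partial permutation matrix; normality of $\phi_b$ on the tensor product then becomes the statement that the corresponding operator is a partial permutation on a distinguished basis, which may be verified by linear algebra provided one knows that $\otimes_{\fun} k$ is monoidal, i.e. that it carries $M_1 \otimes_{\A} M_2$ to $(M_1 \otimes_{\fun} k) \otimes_{k[\A]} (M_2 \otimes_{\fun} k)$. Establishing this compatibility, and confirming that no extra collapsing occurs over $k$, is the secondary obstacle in this approach.
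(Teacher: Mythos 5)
Your treatments of parts (1) and (2) are correct and complete: the cancellation reformulation of type $\alpha$ passes directly to submodules, to quotients $M/N$ (which only identify elements of $N$ with the basepoint, so distinct nonzero classes lift to distinct elements of $M$), and to wedge and smash products. For what it is worth, the paper offers no argument at all here --- the proposition is prefaced by ``the proof of the following proposition is straightforward'' --- so for these parts your write-up supplies what the paper omits, and your observation that part (3) is \emph{not} a consequence of part (1), because $\sim_{\otimes'}$ identifies non-basepoint elements with one another rather than collapsing a submodule, is accurate and worth making.

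The genuine gap is in part (3): what you give is a plan, not a proof, and the invariant your plan rests on is false. You propose to induct on the length of the zig-zag joining $(b\cdot m_1, m_2)$ to $(b\cdot m_1', m_2')$, ``tracking at each pair a canonical $b$-predecessor in the first or second coordinate.'' But a pair occurring in such a zig-zag need not admit any $b$-predecessor, even when the ambient class is nonzero. Concretely, take $\A = \fun\langle x_1,x_2\rangle$, $b = x_2$, and the skew-shape modules $M = M_S$, $N = M_T$ with $S = \{(1,0),(0,1),(1,1)\}$ and $T = \{(0,0),(1,0),(0,1)\}$ (both are convex, hence type $\alpha$). The class of $(x_2\cdot(1,0),(0,0)) = ((1,1),(0,0))$ in $M\otimes_{\A}N$ is exactly $\{((1,1),(0,0)),\ ((0,1),(1,0)),\ ((1,0),(0,1))\}$; one checks that no elementary move out of this set produces a zero coordinate, so it is a nonzero class, namely $x_2\cdot[(1,0),(0,0)]$. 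The move transferring $x_1$ sends $((1,1),(0,0))$ to $((0,1),(1,0))$, and in that pair \emph{neither} coordinate is divisible by $x_2$, since $(0,1)-e_2=(0,0)\notin S$ and $(1,0)-e_2\notin T$. So your induction breaks at the very first step: $b$-divisibility is a property of the class, not of its individual representatives; it can vanish along the chain and reappear later in the other coordinate (here at $((1,0),(0,1)) = ((1,0),x_2\cdot(0,0))$). A correct argument has to compare all $b$-divisible representatives of the class globally and must genuinely use the hypothesis that the product class is nonzero (dropping it, e.g.\ shrinking $T$ to $\{(0,0),(1,0)\}$, collapses the class to zero), and nothing in your sketch does either. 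Your fallback via base change does not close the gap either: even granting that $\otimes_{\fun}k$ carries $M\otimes_{\A}N$ to the tensor product over $k[\A]$ (itself requiring the same zig-zag analysis), a tensor product over the non-semisimple ring $k[\A]$ has no distinguished basis, so ``$\phi_b$ is a partial permutation matrix'' is not a statement one can check by linear algebra there --- exhibiting such a basis is precisely the problem you started with.
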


\begin{rmk}
In general, an extension of type $\alpha$ modules need not be type $\alpha$. Consider the $\fun \langle t \rangle $-module $M = \{ 0, a,b,c,d \}$, with $t$ acting by
\[
t(a)=t(b)=c, \; \; t(c) = d, \; \; t(d)=0, 
\]
and the sub-module $N=\{ 0, a,c,d \}$. Then the short exact sequence
\[
0 \ra N \ra M \ra M/N \ra 0
\]
is admissible (i.e. all morphisms are normal), and $N, M/N$ are type $\alpha$, but $M$ is not. 
\end{rmk}

\begin{definition}
Let $\Anmod^{\alpha}$ denote the full subcategory of $\Anmod$ consisting of type $\alpha$ $\A$-modules and normal morphisms. 
\end{definition}

We thus have a chain of sub-categories
\[
\Anmod^{\alpha} \subset \Anmod \subset \Amod
\]

\subsubsection{Graded modules and modules admitting a grading} \label{graded_mods}

Let $\Gamma$ is a commutative monoid written additively, and $\A$ a $\Gamma$-graded monoid. By this we mean that $\A$ can be written as
\begin{equation} \label{graded_monoid}
\A = \bigoplus_{\gamma \in \Gamma} \A_{\gamma}
\end{equation}
as $\fun$ vector spaces, with $1_{\A}, 0_{\A} \in A_{0}$, and $\A_{\gamma} \cdot \A_{\delta} \subset \A_{\gamma + \delta}$. 

\begin{definition}
A \emph{graded} $\A$-module is an $\A$-module $M$ with a decomposition
\begin{equation} \label{graded_module}
M = \bigoplus_{\gamma \in \Gamma} M_{\gamma}
\end{equation}
such that $0 \in M_{0}$ and $A_{\gamma} \cdot M_{\delta} \subset M_{\gamma+\delta}$. 
\end{definition}

We note that a sub-module of a graded module is automatically homogenous. The following lemma is obvious:

\begin{lemma} Let $\A$ be a $\Gamma$-graded monoid. \label{graded_properties}
\begin{enumerate}
\item If $M$ is a graded $\A$-module and $N \subset M$ is a submodule, then both $N$ and $M/N$ inherit a canonical grading. 
\item If $M_1, M_2$ are graded $\A$-modules, then so are $M_1 \oplus M_2$ and $M_1 \wedge M_2$. 
\item If $\A$ is commutative, and $M_1, M_2$ are graded, then $M_1 \otimes_{\A} M_2$ carries a canonical grading with $deg(m_1 \otimes m_2) = deg(m_1)+deg(m_2)$. 
\end{enumerate}
\end{lemma}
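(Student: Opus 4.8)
The plan is to exploit the fact that, in the non-additive category $\funmod$, a decomposition $M = \bigoplus_{\gamma \in \Gamma} M_\gamma$ of a pointed set is nothing more than a partition of the nonzero elements of $M$, together with the basepoint lying in every piece. Consequently a $\Gamma$-grading on an $\A$-module $M$ is equivalent to a single function $\deg_M : M \setminus \{0_M\} \to \Gamma$, and \emph{every} nonzero element is automatically homogeneous; in particular every nonzero $a \in \A$ is homogeneous of a well-defined degree $\deg(a)$, and the axiom $\A_\gamma \cdot M_\delta \subseteq M_{\gamma+\delta}$ collapses to the single requirement
\[
\deg_M(a \cdot m) = \deg(a) + \deg_M(m) \quad \text{whenever } a\cdot m \neq 0_M .
\]
Each part of the lemma is then proved by writing down the relevant degree function and checking this one identity; the condition $0 \in M_0$ holds trivially throughout, since the basepoint is the shared point of every graded piece.

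For (1), given a submodule $N \subseteq M$ I would restrict $\deg_M$ to $N \setminus \{0\}$: since $N$ is $\A$-stable the displayed identity is inherited verbatim, and $N = \bigoplus_\gamma (N \cap M_\gamma)$ automatically, as already noted before the lemma. For the quotient, recall $M/N = (M \setminus N) \cup \{0\}$, so its nonzero elements are exactly those of $M \setminus N$; defining $\deg_{M/N}$ by the same values as $\deg_M$ is well defined, and the identity descends because $a \cdot \bar m$ is either $\overline{a\cdot m}$ (the same degree shift as in $M$) or the basepoint (when $a \cdot m \in N$). For the coproduct $M_1 \oplus M_2 = M_1 \vee M_2$ the nonzero elements are the disjoint union of those of $M_1$ and $M_2$, so I take $\deg$ to be the common extension of $\deg_{M_1}$ and $\deg_{M_2}$; the action is componentwise, so the identity holds on each summand separately.

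The only genuine content is in the smash and tensor products, and here the two cases are \emph{not} parallel — this is the step I expect to require the most care. For the tensor product I would set $\deg(m_1 \otimes m_2) = \deg_{M_1}(m_1) + \deg_{M_2}(m_2)$, which is exactly the formula asserted in (3). The first thing to verify is that this is constant on $\sim_{\otimes}$-classes, and indeed the generating relation $(a\cdot m_1, m_2) \sim_{\otimes} (m_1, a \cdot m_2)$ assigns degree $\deg(a) + \deg m_1 + \deg m_2$ to both sides; since the action on $M_1 \otimes_{\A} M_2$ passes through a single factor, $a\cdot(m_1 \otimes m_2) = (a\cdot m_1)\otimes m_2$, the shift identity follows at once. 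For the smash product $M_1 \wedge M_2$ the subtlety is that the inherited action is the \emph{diagonal} one, $a \cdot (m_1 \wedge m_2) = (a\cdot m_1)\wedge(a\cdot m_2)$, so the sum $\deg m_1 + \deg m_2$ is shifted by $2\deg(a)$ rather than $\deg(a)$ and hence does not define a grading; instead one must grade $M_1 \wedge M_2$ by the degree coming from a single factor, say $\deg(m_1 \wedge m_2) := \deg_{M_1}(m_1)$, which the diagonal action shifts correctly and which is compatible with the isomorphism $M \wedge (N \oplus L) \simeq (M\wedge N) \oplus (M \wedge L)$. Recognizing that the smash and tensor products demand different degree functions, precisely because their $\A$-module structures (diagonal versus one-sided) differ, is the one point that is not pure bookkeeping.
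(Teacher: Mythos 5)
Your proposal is correct, and there is nothing in the paper to compare it against: the paper simply declares this lemma obvious and supplies no proof at all. Your reduction of a grading to a degree function $\deg_M : M \setminus \{0_M\} \to \Gamma$ satisfying $\deg_M(a\cdot m) = \deg(a) + \deg_M(m)$ whenever $a\cdot m \neq 0_M$ is an exact translation of the paper's definitions, since in $\funmod$ the direct sum is the wedge and every nonzero element is automatically homogeneous; parts (1), the coproduct in (2), and part (3) then reduce to the routine bookkeeping you describe (your well-definedness check for $\sim_\otimes$ is the right one: a nonzero class never passes through pairs with a zero entry, and each generating relation preserves the sum-degree). The genuinely valuable content of your write-up is the smash product: you are right that the inherited action on $M_1 \wedge M_2$ is diagonal, so the naive sum-degree shifts by $2\deg(a)$ and is \emph{not} a grading (e.g.\ for $\A = \fun\langle t\rangle$, $M_1 = M_2 = \{0,a,b\}$ with $t\cdot a = b$, $t\cdot b = 0$, one has $t\cdot(a\wedge a) = b\wedge b$ but the sum-degrees differ by $2$), whereas the single-factor degree $\deg(m_1\wedge m_2) := \deg_{M_1}(m_1)$ does satisfy the shift identity. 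Since part (2) asserts only existence of a grading (unlike parts (1) and (3), it does not claim canonicity), this settles it; this asymmetry between the smash product (diagonal action) and the tensor product (one-sided action, for which the sum formula of part (3) is forced and consistent) is precisely the detail that the word ``obvious'' in the paper glosses over.
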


Given a graded $\A$-module $M$ and $\rho \in \Gamma$, we denote by $M[\rho]$ the graded module with $M[\rho]_{\gamma} := M_{\gamma+\rho}$. We say that a module $M$ \emph{admits a grading} if it has the structure (\ref{graded_module}) for some grading. We denote by $\Anmod^{gr}$ the full sub-category of $\Anmod$ of modules admitting a grading. We do not a priori require the morphisms to be compatible with these. By \ref{graded_properties}, every morphism in $\Anmod^{gr}$ has a kernel and co-kernel. We have a chain of inclusions
\[
\Anmod^{gr} \subset \Anmod \subset \Amod. 
\]

\section{Schemes and coherent sheaves over $\fun$} \label{schemes_sheaves}

In this section, we briefly review the notions of monoid schemes and coherent sheaves on them. These form the simplest version of a theory of "algebraic geometry over $\fun$", which allows for a geometric interpretation of the main results of the paper. In the interest of brevity, we do not aim for this section to be self-contained, and refer the interested reader to \cite{D, CHWW, LS} for details.

Just as an ordinary scheme is obtained by gluing prime spectra of commutative rings, a monoid scheme is obtained by gluing prime spectra of commutative monoids.
Given a commutative monoid $\A$, an \emph{ideal} of $\A$ is a subset $\a \subset \A$ such that $\a \cdot \A \subset \a$. A proper ideal $\p \subset \A$ is \emph{prime} if $xy \in \p$ implies either $x \in \p$ or $y \in \p$. One defines the topological space $\mspec \A$  to be the set $$\mspec A := \{ \p | \p \subset \A \textrm{ is a prime ideal } \}, $$ with the closed sets of the form $$ V(\a) := \{ \p | \a \subset \p, \p \textrm{ prime } \},  $$ for ideals $\a \in \A$, together with the empty set. 

Given a multiplicatively closed subset $S \subset \A$, the \emph{localization of $\A$ by $S$}, denoted $S^{-1} \A$, is defined to be the monoid consisting of symbols $\{ \frac{a}{s} |  a \in \A, s \in S \}$, with the equivalence relation $$\frac{a}{s} = \frac{a'}{s'}  \iff \exists \; s'' \in S \textrm{ such that } as's'' = a's s'', $$ and multiplication is given by $\frac{a}{s} \times \frac{a'}{s'} = \frac{aa'}{ss'} $. 

For $f \in \A$, let $S_f$ denote the multiplicatively closed subset $\{ 1, f, f^2, f^3, \cdots,  \}$. We denote by $\A_f$ the localization $S^{-1}_f \A$, and by $D(f)$ the open set $\mspec \A \backslash V(f) \simeq \spec \A_f$, where $V(f) := \{ \p \in \spec \A | f \in \p \}$. The open sets $D(f)$ cover $\spec \A$. 
$\spec \A$ is equipped with a  \emph{structure sheaf} of monoids $\mc{O}_{\A}$, satisfying the property $\Gamma(D(f), \mc{O}_{\A}) = \A_f$.    Its stalk at $\p \in \mspec \A$ is $\A_{\p} := S^{-1}_{\p} \A$, where $S_{\p} = A \backslash \p$. Just as in the classical setting, the category of affine monoid schemes is equivalent to the opposite of the category of commutative monoids. From here, one can proceed to construct general monoid schemes by gluing the affines $(\mspec \A, \mc{O}_{\A})$. 

One develops the theory of quasicoherent sheaves on monoid schemes just as in the classical case, and obtains an equivalence between the category of $\A$-modules and the category of quasicoherent sheaves on $\mspec \A$. An $\A$-module $M$ gives rise to a quasicoherent sheaf $\wt{M}$ on $\mspec \A$ whose space of sections over a basic open affine $D(f)$ is
\[
\wt{M}(D(f)) := M_{f}
\]
(where $M_{f}$ is defined in the obvious way), and all quasicoherent sheaves arise this way. For an element $m \in M$, define $$\Ann_{\A} (m) := \{ a \in \A | a \cdot m = 0_M \}.$$  Obviously, $0_{\A} \subset \Ann_{\A} (m) \;  \forall m \in M$. Define $$ \Ann_{\A}(M) = \bigcap_{m \in M} \Ann_{\A}(m). $$ $\Ann_{\A}(M)$ is an ideal in $\A$, and when $M$ is finitely generated defines the scheme-theoretic support of $M$, i.e.:
\[
\Supp({M}) = \{ \p \in \mspec \A \vert {M}_{\p} \neq 0 \} = V(\Ann_{\A}(M)).
\]

We will need the following result, which is proved the same way as for commutative rings and their modules

\begin{lemma} \label{support_lemma}
Suppose that $\A$ is a commutative monoid, and that 
\[
0 \rightarrow M {\rightarrow} L {\rightarrow} N \rightarrow 0
\]
is an admissible exact sequence. Then $\Supp (L) = \Supp(M) \cup \Supp(N)$ 
\end{lemma}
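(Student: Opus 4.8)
The plan is to sidestep the question of exactness of localization in this non-additive setting and instead reduce the statement to a comparison of annihilator ideals together with the topology of $\mspec\A$. Since every object of $\Anmod$ is a finite, hence finitely generated, $\A$-module, the identity $\Supp(X)=V(\Ann_\A(X))$ applies to $M$, $L$, $N$, so it suffices to understand how $\Ann_\A$ interacts with the admissible sequence $0\to M\xrightarrow{f} L\xrightarrow{g} N\to 0$. Here admissibility gives exactly what I need: $f$ is an injective normal morphism, $g$ is surjective, and $\Ker g=f(M)$, so $N\simeq L/f(M)$.

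The key algebraic step is the two-sided containment (the ``squeeze'')
\[
\Ann_\A(M)\cdot\Ann_\A(N)\;\subseteq\;\Ann_\A(L)\;\subseteq\;\Ann_\A(M)\cap\Ann_\A(N),
\]
where the product on the left is the ideal $\{ab : a\in\Ann_\A(M),\, b\in\Ann_\A(N)\}$. The right inclusion is immediate: if $a\cdot\ell=0_L$ for all $\ell$, then $a$ kills $M$ (using $f(a\cdot m)=a\cdot f(m)=0$ and injectivity of $f$) and kills $N=L/f(M)$ (using surjectivity of $g$). For the left inclusion I take $a\in\Ann_\A(M)$, $b\in\Ann_\A(N)$, $\ell\in L$: since $g(b\cdot\ell)=b\cdot g(\ell)=0_N$, exactness at $L$ forces $b\cdot\ell\in f(M)$, and then $a\cdot(b\cdot\ell)=0$ because $a$ annihilates $f(M)\simeq M$; hence $(ab)\cdot\ell=0$ for every $\ell$.

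Finally I would invoke the elementary identity in $\mspec\A$ that, for ideals $\a,\mathfrak{b}$,
\[
V(\a\cap\mathfrak{b})=V(\a\cdot\mathfrak{b})=V(\a)\cup V(\mathfrak{b}).
\]
The chain $\a\cdot\mathfrak{b}\subseteq\a\cap\mathfrak{b}\subseteq\a,\mathfrak{b}$ and the inclusion-reversing nature of $V$ give $V(\a)\cup V(\mathfrak{b})\subseteq V(\a\cap\mathfrak{b})\subseteq V(\a\cdot\mathfrak{b})$, while the reverse inclusion $V(\a\cdot\mathfrak{b})\subseteq V(\a)\cup V(\mathfrak{b})$ is exactly where primality enters: a prime containing $\a\cdot\mathfrak{b}$ but neither $\a$ nor $\mathfrak{b}$ would have to contain a product $ab$ with $a\notin\p$, $b\notin\p$, contradicting $xy\in\p\Rightarrow x\in\p\text{ or }y\in\p$. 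Applying $V$ to the squeeze with $\a=\Ann_\A(M)$, $\mathfrak{b}=\Ann_\A(N)$ then pins $V(\Ann_\A(L))$ between two copies of $V(\Ann_\A(M))\cup V(\Ann_\A(N))$, forcing equality and hence $\Supp(L)=\Supp(M)\cup\Supp(N)$.

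The one point I expect to require care — and the only genuine departure from the commutative-ring heuristic — is that $\Ann_\A(L)$ need \emph{not} equal $\Ann_\A(M)\cap\Ann_\A(N)$ on the nose: an element killing both $M$ and $N$ only yields $a^2\cdot\ell=0$ rather than $a\cdot\ell=0$. This is precisely why one cannot conclude by comparing annihilators directly, and why the product-ideal containment, invisible at the level of supports, is what makes the argument go through. Everything else (the behavior of $\Ann_\A$ under submodules and quotients, and the ideal identities in $\mspec\A$) is formally identical to the ring case.
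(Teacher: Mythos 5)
Your proof is correct, and it fills in details the paper deliberately omits: the paper's ``proof'' is just the remark that the lemma ``is proved the same way as for commutative rings and their modules,'' with the standard ring argument being localization at a prime (exactness of $(-)_{\p}$ immediately gives $L_\p \neq 0 \iff M_\p \neq 0$ or $N_\p \neq 0$). You instead run the other classical argument, the annihilator squeeze $\Ann_\A(M)\cdot\Ann_\A(N) \subseteq \Ann_\A(L) \subseteq \Ann_\A(M)\cap\Ann_\A(N)$ followed by $V(\a\cdot\mathfrak{b}) = V(\a\cap\mathfrak{b}) = V(\a)\cup V(\mathfrak{b})$. This buys something real in the non-additive setting: you never need to verify that localization of pointed-set modules carries admissible exact sequences to admissible exact sequences, and every ingredient you do use is explicitly on record in the paper --- the identity $\Supp(X) = V(\Ann_\A(X))$ for finitely generated modules (automatic here since objects of $\Anmod$ are finite), the definition of prime ideal, and the inclusion-reversing property of $V$. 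The price is the finite-generation hypothesis, which the localization route does not need, but which is vacuous in this category. Two further points in your write-up deserve the emphasis you give them: (i) normality of $f$ is genuinely used to get injectivity from exactness at $M$, since in pointed sets a trivial kernel does not imply injectivity --- this is the one spot where ``admissible'' rather than merely ``exact'' matters; and (ii) the observation that $\Ann_\A(L)$ can be strictly smaller than $\Ann_\A(M)\cap\Ann_\A(N)$ (an element killing $M$ and $N$ a priori only satisfies $a^2 \cdot \ell = 0$) correctly explains why the product-ideal inclusion, and hence primality, cannot be dispensed with. One cosmetic remark: in a commutative monoid the set $\{ab : a\in\Ann_\A(M),\, b\in\Ann_\A(N)\}$ is indeed already an ideal (no additive closure is needed, unlike for rings), so your use of $V$ on it is licensed by the paper's definitions as stated.
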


This shows that $\A$-modules with support contained in a closed subset $Z \subset \mspec \A$ are closed under admissible extensions.

\section{The Hall algebra of $\Anmod$ and its quotients} \label{Hall_Alg}

Let $\A$ be a finitely generated monoid. We begin this section by reviewing the construction of the Hall algebra $\H_{\A}$ of the category $\Anmod$ following \cite{Sz2}. We then discuss Hopf ideals in $\H_{\A}$ corresponding to the conditions introduced in Section \ref{special1} and  prescribed support in Section \ref{schemes_sheaves}. For more on Hall algebras in the classical setting of Abelian categories, we refer the reader to \cite{S}. 


\medskip
 
As a vector space:
\begin{equation*} 
\H_{\A} := \{ f: \on{Iso}(\Anmod) \rightarrow \mathbb{Q} \; \vert \; \# \on{supp} (f) < \infty \}.
\end{equation*}
We equip $\H_{\A}$ with the convolution product
\begin{equation*} \label{Hall_prod}
f \star g (\ol{M}) = \sum_{N \subset M} f(\ol{M/N}) g(\ol{N}),
\end{equation*}
where the sum is over all $\A$ sub-modules $N$ of $M$ (in what follows, it is conceptually helpful to fix a representative of each isomorphism class). 
Note that Lemma \ref{finitary} and the finiteness of the support of $f,g$ ensures that the sum in 
(\ref{Hall_prod}) is finite, and that $f \star g$ is again finitely supported. 

$\H_{\A}$ is spanned by $\delta$-functions $\delta_{\ol{M}} \in \H_A$ supported on individual isomorphism classes, and so it is useful to make explicit the multiplication of two such elements. We have
\begin{equation} \label{deltamult}
\delta_{\ol{M}} \star \delta_{\ol{N}} = \sum_{ \ol{R} \in \on{Iso}(\Anmod) } \mathbf{P}^R_{M,N} \delta_{\ol{R}} 
\end{equation}
where 
\[
\mathbf{P}^R_{M,N}  := \# \vert \{ L \subset R, L \simeq N, R/L \simeq M \} \vert
\]
The product
\[
\mathbf{P}^R_{M,N} \vert \on{Aut}(M) \vert \vert \on{Aut}(N) \vert
\]
counts the isomorphism classes of admissible short exact sequences of the form
\begin{equation} \label{ses}
0 \rightarrow N \rightarrow R \rightarrow M \rightarrow 0,
\end{equation}
where $ \on{Aut}(M) $ is the automorphism group of $M$. 

$\H_{\A}$ is equipped with a coproduct
\begin{equation*}
\Delta: \H_{\A} \rightarrow \H_{\A} \otimes \H_{\A}
\end{equation*}
given by
\begin{equation} \label{Hall_coprod}
\Delta(f)(\ol{M},\ol{N}) := f(\ol{M \oplus N}).
\end{equation} 
which is clearly co-commutative, as well as a natural grading by $\mathbb{Z}_{\geq 0}$ corresponding to the dimension of $M \in \Anmod$, i.e. if $dim(M)=n$, then $deg(\delta_{\ol{M}}) = n$.   

\begin{rmk}
$\H_{\A}$ also carries a grading by $K_0 (\Anmod)$ with $deg(\delta_{\ol{M}}) = [M] \in K_0 (\Anmod)$
\end{rmk}

The following theorem is proved in \cite{Sz2}:

\begin{theorem}
$(\H_{\A}, \star, \Delta)$ is a graded, connected, co-commutative bialgebra. It is called the \emph{Hall algebra of $\A$--modules}. 
\end{theorem}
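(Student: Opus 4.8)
The plan is to verify the bialgebra axioms one at a time, in each case reducing the claim to a combinatorial fact about submodules, quotients, and direct sums in $\Anmod$ that is already available from Section \ref{MM}. The grading and connectedness come first and are immediate from a dimension count: since $\dim(M) = |M| - 1$, one has $\dim(M) = \dim(N) + \dim(M/N)$ for every submodule $N \subset M$ and $\dim(M \oplus N) = \dim(M) + \dim(N)$. Hence $\star$ adds degrees, $\Delta$ splits a degree additively, and the degree-zero part is $(\H_\A)_0 = \mathbb{Q}\,\delta_{\ol{0}}$; connectedness is exactly this last statement. For the unit I would take $\delta_{\ol{0}}$: in $\delta_{\ol{0}} \star f(\ol{M}) = \sum_{N \subset M} \delta_{\ol{0}}(\ol{M/N}) f(\ol{N})$ only the term $N = M$ survives, giving $f(\ol{M})$, and symmetrically on the other side. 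The counit is $\epsilon(f) = f(\ol{0})$, which satisfies its axiom because $\Delta(f)(\ol{0}, \ol{N}) = f(\ol{0 \oplus N}) = f(\ol{N})$.

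For associativity I would expand $(f \star g) \star h(\ol{M})$ as a double sum, first over $N \subset M$ and then over submodules of $M/N$. By the flag correspondence in property \ref{property9}, the submodules of $M/N$ are precisely the $L/N$ with $N \subset L \subset M$, and the isomorphism $(M/N)/(L/N) \cong M/L$ from that same property converts the expression into the manifestly symmetric sum $\sum_{N \subset L \subset M} f(\ol{M/L})\, g(\ol{L/N})\, h(\ol{N})$; the identical manipulation applied to $f \star (g \star h)(\ol{M})$ produces the same thing. The remaining coalgebra axioms follow formally from the associativity, commutativity, and unitality of $\oplus$ up to isomorphism: coassociativity from $(M \oplus N) \oplus L \cong M \oplus (N \oplus L)$, and co-commutativity from $\Delta(f)(\ol{M}, \ol{N}) = f(\ol{M \oplus N}) = f(\ol{N \oplus M}) = \Delta(f)(\ol{N}, \ol{M})$.

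The main work, and the step I expect to be the real obstacle, is the compatibility axiom $\Delta(f \star g) = \Delta(f) \star \Delta(g)$, where the right-hand product is the componentwise convolution on $\H_\A \otimes \H_\A$ viewed as finitely supported functions on $\on{Iso}(\Anmod) \times \on{Iso}(\Anmod)$. Evaluating the left side at $(\ol{M}, \ol{N})$ gives $\sum_{K \subset M \oplus N} f(\ol{(M \oplus N)/K})\, g(\ol{K})$, while the right side gives $\sum_{M' \subset M,\, N' \subset N} f(\ol{(M/M') \oplus (N/N')})\, g(\ol{M' \oplus N'})$. To match these term by term I would exhibit a bijection $K \leftrightarrow (M', N')$ between submodules of $M \oplus N$ and pairs of submodules of $M$ and $N$, and this is exactly where Remark \ref{subobj} does the heavy lifting: decomposing $M$ and $N$ into indecomposables and regrouping, every submodule $K \subset M \oplus N$ satisfies $K = (K \cap M) \oplus (K \cap N)$, so $K \mapsto (K \cap M,\, K \cap N)$ and $(M', N') \mapsto M' \oplus N'$ are mutually inverse. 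It then remains only to identify the summands under this bijection: $K \cong M' \oplus N'$ is immediate, and $(M \oplus N)/(M' \oplus N') \cong (M/M') \oplus (N/N')$ follows directly from the set-theoretic descriptions of $\oplus$ as a wedge and of the quotient as collapsing a submodule to the basepoint. The crux is thus entirely the obstruction-free decomposition of submodules of direct sums; without Remark \ref{subobj} the two sums would fail to correspond, and this is the one place where the non-additivity of $\Anmod$ could in principle interfere with the bialgebra formalism.
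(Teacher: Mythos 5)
Your proof is correct and is essentially the same argument that underlies the paper's statement---note the paper gives no proof here, deferring to \cite{Sz2}, where the same two ingredients carry the load: the flag correspondence (property (\ref{property9})) for associativity, and the splitting $K = (K\cap M)\oplus (K\cap N)$ of submodules of a wedge sum (Remark \ref{subobj}) for the product--coproduct compatibility, which is exactly the non-additive phenomenon that lets the untwisted coproduct work. The only point you leave implicit is that $\Delta(f)$ actually lands in $\H_{\A}\otimes\H_{\A}$, i.e.\ has finite support; this follows from the Krull--Schmidt proposition, since a fixed module $R$ admits only finitely many decompositions $R \cong M \oplus N$ up to isomorphism.
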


By the Milnor-Moore theorem, $\H_{\A}$ is a Hopf algebra isomorphic to  $\mathbb{U}(\n_{\A})$ - the universal enveloping algebra of $\n_{\A}$, where the latter is the Lie algebra of its primitive elements. The definition of the co-product implies that $\n_{\A}$ is spanned  by $\delta_{\ol{M}}$ for isomorphism classes $\ol{M}$ of indecomposable $\A$--modules, with bracket
\[
[\delta_{\ol{M}}, \delta_{\ol{N}}] = \delta_{\ol{M}} \star \delta_{\ol{N}} - \delta_{\ol{N}} \star \delta_{\ol{M}}.
\]

\subsection{Ideals and quotients of Hall algebras}

The product (\ref{deltamult}) in $\H_{\A}$ counts all admissible $\A$-module extensions of $M$ by $N$. We now proceed to define Hopf ideals in $\H_{\A}$ yielding quotients which may be interpreted as Hall algebras of certain sub-categories of $\Anmod$.

\subsubsection{$\H^{\alpha}_{\A}$}

Let
$$
\mc{J}^{\alpha}  = \on{span} \{ \delta_{\ol{M}} \vert \; M \textrm{ is not of type } \alpha \} \subset \H_{\A}. 
$$

\begin{theorem} \label{alpha_ideal}
$\mc{J}^{\alpha} \subset \H_{\A}$ is a Hopf ideal. 
\end{theorem}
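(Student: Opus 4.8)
The plan is to verify that $\mc{J}^{\alpha}$ satisfies the defining conditions of a Hopf ideal: it is a two-sided ideal for $\star$, a coideal for $\Delta$, and is annihilated by the counit; compatibility with the antipode will then come for free. The entire argument rests on Proposition \ref{normal_properties}, which says that the class of type $\alpha$ modules is closed under passage to submodules and quotients (part (1)) and under direct sums (part (2)). The guiding observation is that ``failing to be type $\alpha$'' is a condition that propagates \emph{upward} through extensions and direct sums, which is exactly what makes the span of non-type-$\alpha$ classes both an ideal and a coideal.

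For the ideal property, recall from (\ref{deltamult}) that $\delta_{\ol{M}} \star \delta_{\ol{N}} = \sum_{\ol{R}} \mathbf{P}^R_{M,N}\, \delta_{\ol{R}}$, where $\mathbf{P}^R_{M,N} \neq 0$ forces the existence of a submodule $L \subset R$ with $L \simeq N$ and $R/L \simeq M$. I would argue by contraposition: if $R$ were type $\alpha$, then by Proposition \ref{normal_properties}(1) both its submodule $L \simeq N$ and its quotient $R/L \simeq M$ would be type $\alpha$. Hence, if either $M$ or $N$ fails to be type $\alpha$, every $R$ occurring with nonzero coefficient fails to be type $\alpha$, so $\delta_{\ol{M}} \star \delta_{\ol{N}} \in \mc{J}^{\alpha}$. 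Taking $M$ non-type-$\alpha$ with $N$ arbitrary gives $\mc{J}^{\alpha} \star \H_{\A} \subset \mc{J}^{\alpha}$, and taking $N$ non-type-$\alpha$ with $M$ arbitrary gives $\H_{\A} \star \mc{J}^{\alpha} \subset \mc{J}^{\alpha}$, so $\mc{J}^{\alpha}$ is two-sided.

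For the coideal property, I would use that the coproduct (\ref{Hall_coprod}) expands on basis elements as $\Delta(\delta_{\ol{R}}) = \sum \delta_{\ol{M}} \otimes \delta_{\ol{N}}$, summed over pairs of isomorphism classes $(\ol{M}, \ol{N})$ with $M \oplus N \simeq R$. If $R$ is not type $\alpha$, then by the contrapositive of Proposition \ref{normal_properties}(2) at least one of $M, N$ fails to be type $\alpha$ in each term, placing that term in $\mc{J}^{\alpha} \otimes \H_{\A}$ or $\H_{\A} \otimes \mc{J}^{\alpha}$; hence $\Delta(\mc{J}^{\alpha}) \subset \mc{J}^{\alpha} \otimes \H_{\A} + \H_{\A} \otimes \mc{J}^{\alpha}$. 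The counit, being projection onto the degree-zero part spanned by the (type $\alpha$) zero module, vanishes on every $\delta_{\ol{M}}$ with $M \neq 0$, and in particular on $\mc{J}^{\alpha}$.

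Finally, since $\mc{J}^{\alpha}$ is homogeneous, the quotient $\H_{\A}/\mc{J}^{\alpha}$ is a graded connected bialgebra and therefore automatically carries an antipode; as a bialgebra map between Hopf algebras necessarily intertwines antipodes, $S(\mc{J}^{\alpha}) \subset \mc{J}^{\alpha}$ follows without separate verification. I do not anticipate a serious obstacle: the content is entirely packaged into Proposition \ref{normal_properties}, and the only thing to get right is the bookkeeping of directions (submodule versus quotient) in the contrapositive arguments for the two one-sided inclusions.
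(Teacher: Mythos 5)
Your proof is correct and follows essentially the same route as the paper: reduce to showing $\mc{J}^{\alpha}$ is a bialgebra ideal (the antipode then being automatic for a graded connected bialgebra quotient), use Proposition \ref{normal_properties}(1) contrapositively on the product formula (\ref{deltamult}) for the two-sided ideal property, and Proposition \ref{normal_properties}(2) contrapositively on the coproduct (\ref{Hall_coprod}) for the coideal property. The paper's proof is terser but the logic is identical; your version merely makes the contrapositive bookkeeping and the counit/antipode steps explicit.
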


\begin{proof}
$\mc{J}^{\alpha}$ is a graded subspace contained in the augmentation ideal of $\H_{\A}$. By the Milnor-Moore theorem, It suffices to show that $\mc{J}^{\alpha}$ is a bialgebra ideal, since then $\H_{\A} / \mc{J}^{\alpha}$ will again be a graded, connected, co-commutative bialgebra. 
It follows from part $(1)$ of Proposition \ref{normal_properties} that $\mc{J}^{\alpha}$ is two-sided ideal for the product $\star$.  To show that it is a co-ideal, it suffices to show that if $M$ is not of type $\alpha$, then
\begin{equation}
 \Delta(\delta_{\ol{M}}) \subset  H_{\A} \otimes \mc{J}^{\alpha} + \mc{J}^{\alpha} \otimes H_{\A} .
\end{equation}
From the definition of the co-product (\ref{Hall_coprod}),  $\Delta(\delta_{\ol{M}})$ is a linear combination of terms of the form $\delta_{\ol{M'}} \otimes \delta_{\ol{M''}}$ where $M=M' \oplus M''$. It follows from part $(2)$ of Proposition \ref{normal_properties} that $\delta_{\ol{M'}} \in \mc{J}^{\alpha}$ or $\delta_{\ol{M''}} \in \mc{J}^{\alpha}$. 
\end{proof}

\begin{definition}
The \emph{Hall algebra of  $\A$--modules of type $\alpha$ } is the Hopf algebra $$ \H^{\alpha}_{\A} := \H_{\A}/ \mc{J}^{\alpha}.$$ 
\end{definition}

It is immediate that $\H^{\alpha}_{\A} \simeq \mathbb{U}(\n^{\alpha}_{\A})$, where the Lie algebra $$ \n^{\alpha}_{\A} = \on{span} \{ \delta_{\ol{M}} \vert M \textrm{ is indecomposable of type } \alpha \}, $$ and where
\begin{equation} \label{normal_delta_mult}
\delta_{\ol{M}} \star \delta_{\ol{N}} = \sum_{ \ol{R} \in \on{Iso}(\AnNmod) } \mathbf{P}^R_{M,N} \delta_{\ol{R}} 
\end{equation}
 
 \subsubsection{$\H^{gr}_{\A}$}
 
Suppose that $\A$ is a $\Gamma$-graded monoid as in Section \ref{graded_mods}. Let
 \[
\mc{J}^{gr}  = \on{span} \{ \delta_{\ol{M}} \vert \; M \textrm{ does not admit a grading} \} \subset \H_{\A} 
 \]
 
 \begin{theorem}
 $\mc{J}^{gr} \subset \H_{\A}$ is a Hopf ideal. 
 \end{theorem}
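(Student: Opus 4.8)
The plan is to mirror the proof of Theorem \ref{alpha_ideal} almost verbatim, replacing the type $\alpha$ condition by the property of admitting a grading and replacing the appeals to Proposition \ref{normal_properties} by the corresponding parts of Lemma \ref{graded_properties}. First I observe that $\mc{J}^{gr}$ is a graded subspace of $\H_{\A}$ contained in the augmentation ideal: it is spanned by basis vectors $\delta_{\ol{M}}$ with $\dim M \geq 1$, since the zero module carries the trivial grading and so $\delta_{\ol{0}} \notin \mc{J}^{gr}$. By the Milnor-Moore theorem, once $\mc{J}^{gr}$ is known to be a bialgebra ideal the quotient $\H_{\A}/\mc{J}^{gr}$ is again a graded, connected, co-commutative bialgebra, hence automatically a Hopf algebra; thus it suffices to verify the ideal and co-ideal properties separately.

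To see that $\mc{J}^{gr}$ is a two-sided ideal for $\star$, I would argue by contraposition using part (1) of Lemma \ref{graded_properties}. In the expansion (\ref{deltamult}) of $\delta_{\ol{M}} \star \delta_{\ol{N}}$, every $\ol{R}$ occurring with nonzero coefficient admits a submodule $L \simeq N$ with $R/L \simeq M$. If $R$ admitted a grading, then fixing such a grading, both the submodule $L$ and the quotient $R/L$ would inherit gradings by part (1) of Lemma \ref{graded_properties}, so that $N \simeq L$ and $M \simeq R/L$ would both admit gradings. Contrapositively, whenever $M$ or $N$ fails to admit a grading, no $R$ in the sum admits a grading, and hence $\delta_{\ol{M}} \star \delta_{\ol{N}} \in \mc{J}^{gr}$. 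This shows $\mc{J}^{gr}$ absorbs $\star$-multiplication on both sides.

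To see that $\mc{J}^{gr}$ is a co-ideal, I must check that for $M$ not admitting a grading,
\[
\Delta(\delta_{\ol{M}}) \in \H_{\A} \otimes \mc{J}^{gr} + \mc{J}^{gr} \otimes \H_{\A}.
\]
By the definition (\ref{Hall_coprod}) of $\Delta$, the element $\Delta(\delta_{\ol{M}})$ is a linear combination of terms $\delta_{\ol{M'}} \otimes \delta_{\ol{M''}}$ indexed by decompositions $M = M' \oplus M''$. If both $M'$ and $M''$ admitted gradings, then $M' \oplus M''$ would admit a grading by part (2) of Lemma \ref{graded_properties}, contradicting the hypothesis on $M$. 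Hence in each such term at least one of $\delta_{\ol{M'}}$, $\delta_{\ol{M''}}$ lies in $\mc{J}^{gr}$, which is precisely the required containment.

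The only point deserving care---and the sole genuine difference from the type $\alpha$ argument---is that $\mc{J}^{gr}$ is defined by the property of \emph{admitting} a grading rather than by carrying a fixed one, whereas Lemma \ref{graded_properties} is phrased for modules already equipped with a grading. The resolution is the one used implicitly above: admitting a grading is an existence statement, so I may first fix a grading on the "ambient" module ($R$ in the ideal step, and $M = M' \oplus M''$ in the co-ideal step) and only then apply the lemma, which produces compatible gradings on the relevant sub-, quotient-, or summand-modules. Beyond this I expect no obstacle, as the combinatorial bookkeeping is identical to that of Theorem \ref{alpha_ideal}.
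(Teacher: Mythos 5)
Your proof is correct and follows exactly the route the paper takes: the paper's proof of this theorem simply says it is identical to that of Theorem \ref{alpha_ideal} with Lemma \ref{graded_properties} substituted for Proposition \ref{normal_properties}, which is precisely the argument you spell out (Milnor--Moore reduction to the bialgebra-ideal property, part (1) of the lemma for the ideal step via contraposition, part (2) for the co-ideal step). Your explicit handling of the distinction between \emph{admitting} a grading and carrying a fixed one --- fixing a grading on the ambient module before invoking the lemma --- is a point the paper leaves implicit, and is resolved correctly.
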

 
 \begin{proof}
 The proof is identical to that of Theorem \ref{alpha_ideal} except Lemma \ref{graded_properties} is used rather than Proposition \ref{normal_properties}
 \end{proof}
 
 \begin{definition}
 The \emph{ Hall algebra of modules admitting a grading } is the Hopf algebra
$$ \H^{gr}_{\A} := \H_{\A}/ \mc{J}^{gr}.$$ 
\end{definition}

We have an isomorphism $\H^{gr}_{\A} \simeq \mathbb{U}(\n^{gr}_{\A}) $ where the Lie algebra  $$ \n^{gr}_{\A} = \on{span} \{ \delta_{\ol{M}} \vert M \textrm{ is indecomposable and admits a grading }  \}, $$ and where
\begin{equation} \label{normal_delta_mult}
\delta_{\ol{M}} \star \delta_{\ol{N}} = \sum_{ \ol{R} \in \on{Iso}( \Anmod^{gr} ) } \mathbf{P}^R_{M,N} \delta_{\ol{R}} 
\end{equation}

 \begin{rmk}
 If $\A$ is a graded monoid, we may combine the two previous constructions, as $\mc{J}^{\alpha} + \mc{J}^{gr}$ is also a Hopf ideal, to obtain the quotient
 \[
 \H^{\alpha, gr}_{\A} := \H_{\A} / (\mc{J}^{\alpha} + \mc{J}^{gr})
 \]
 We then have the following diagram of Hopf algebras
 \begin{equation} \label{comm_diag}
  \begin{tikzcd}
    \H_{\A} \arrow{r} \arrow[swap]{d} & \H^{\alpha}_{A} \arrow{d} \\
     \H^{gr}_{\A} \arrow{r} & \H^{\alpha, gr}_{\A} \\
  \end{tikzcd}
\end{equation}
 
 \end{rmk}

 \subsection{Hall algebras of modules with prescribed support}
 
Suppose now that $\A$ is a finitely generated commutative monoid, and let $Z \subset \mspec \A$ be a closed subset. By Lemma \ref{support_lemma}, the $\A$-modules with support contained in $Z$ form a sub-category $\Amod_{Z,n} \subset \Anmod$ closed under extensions. Likewise, we obtain a sub-category $\Amod^{\alpha}_{Z,n} \subset \AnNmod$ by restricting to modules of type $\alpha$ with support in $Z$.  Let
\begin{equation} \label{Zsupp1}
\H_{Z, \A} := \on{span} \{ \delta_{\ol{M}} \in \H_{\A} \vert \on{supp}(M) \subset Z \}
\end{equation}
\begin{equation} \label{Zsupp2}
\H^{\alpha}_{Z,\A} := \on{span} \{ \delta_{\ol{M}} \in \H^{\alpha}_{\A} \vert \on{supp}(M) \subset Z \}
\end{equation}
If $\A$ is graded, we can also form
\begin{equation} \label{Zsupp3}
\H^{gr}_{Z,\A} := \on{span} \{ \delta_{\ol{M}} \in \H^{gr}_{\A} \vert \on{supp}(M) \subset Z \}
\end{equation}
as well as $\H^{\alpha, gr}_{Z, \A}$ etc. The following is immediate: 

\begin{proposition} \label{Hall_with_support}
Let $\A$ be a finitely generated commutative monoid, and $Z \subset \mspec \A$ a closed subset. Then $\H_{Z,\A}, \H^{\alpha}_{Z,\A}, \H^{gr}_{Z,\A}$, $\H^{\alpha, gr}_{Z, \A}$ defined as in \ref{Zsupp1}, \ref{Zsupp2}, \ref{Zsupp3} form graded, connected, co-commutative Hopf subalgebras of $\H_{\A}$, $\H^{\alpha}_{\A}$, $\H^{gr}_{Z,\A}$,  $\H^{\alpha,gr}_{\A}$ respectively. 
\end{proposition}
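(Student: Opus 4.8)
The plan is to verify directly that each of the four spans is closed under the product $\star$ and the coproduct $\Delta$, contains the unit, and inherits the remaining structure from its ambient bialgebra, with Lemma \ref{support_lemma} serving as the single essential input. I will carry out the argument for $\H_{Z,\A} \subset \H_{\A}$ in detail; the three decorated cases are then verbatim repetitions performed inside the quotient Hopf algebras $\H^{\alpha}_{\A}$, $\H^{gr}_{\A}$, and $\H^{\alpha,gr}_{\A}$.

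First I would establish closure under $\star$. By (\ref{deltamult}), every basis element $\delta_{\ol R}$ occurring with nonzero coefficient in $\delta_{\ol M} \star \delta_{\ol N}$ fits into an admissible short exact sequence $0 \ra N \ra R \ra M \ra 0$. If $\supp(M), \supp(N) \subset Z$, then Lemma \ref{support_lemma} gives $\supp(R) = \supp(M) \cup \supp(N) \subset Z$, so $\delta_{\ol R} \in \H_{Z,\A}$; hence $\H_{Z,\A}$ is a subalgebra. Next, closure under $\Delta$: from (\ref{Hall_coprod}) we have $\Delta(\delta_{\ol M}) = \sum \delta_{\ol{M'}} \otimes \delta_{\ol{M''}}$, summed over ordered decompositions $M \simeq M' \oplus M''$. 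Applying Lemma \ref{support_lemma} to the split sequence $0 \ra M' \ra M \ra M'' \ra 0$ yields $\supp(M) = \supp(M') \cup \supp(M'')$, so $\supp(M) \subset Z$ forces $\supp(M') \subset Z$ and $\supp(M'') \subset Z$, giving $\Delta(\delta_{\ol M}) \in \H_{Z,\A} \otimes \H_{Z,\A}$. The unit $\delta_{\ol 0}$ lies in $\H_{Z,\A}$ since $\Ann_{\A}(0) = \A$ forces $\supp(0) = \emptyset \subset Z$, while the counit and the $\Zplus$-grading restrict directly. Thus $\H_{Z,\A}$ is a graded, connected, co-commutative sub-bialgebra; being graded and connected, it automatically carries an antipode computed by the standard recursion over the reduced coproduct, which by the subcoalgebra property just proved stays inside $\H_{Z,\A}$. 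Hence the antipode of $\H_{\A}$ restricts to it, making it a Hopf subalgebra.

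For the three decorated variants the bookkeeping is identical, the only new point being that the modules $R$, $M'$, $M''$ produced in the two closure arguments must remain of type $\alpha$ (resp. graded, resp. both). This is precisely what parts (1) and (2) of Proposition \ref{normal_properties} (resp. Lemma \ref{graded_properties}) supply: submodules, quotients, and direct summands of type $\alpha$ (resp. graded) modules are again type $\alpha$ (resp. graded). Since Lemma \ref{support_lemma} is insensitive to the type $\alpha$ and grading conditions, the support computations carry over unchanged, and the same antipode argument applies inside each quotient.

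The verification is essentially routine once Lemma \ref{support_lemma} is available, and I do not expect a genuine obstacle. The one step meriting attention is the coproduct closure: one must observe that the support condition $\supp(M) \subset Z$ is downward closed under passing to direct summands, which is not an independent fact but follows from applying the support lemma to a \emph{split} exact sequence. Everything else reduces to transporting the graded--connected bialgebra structure, and hence the automatic antipode, to the subspace.
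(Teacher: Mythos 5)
Your proof is correct and takes exactly the route the paper intends: the paper offers no written proof at all (the proposition is introduced with ``The following is immediate''), relying on the preceding remark that Lemma \ref{support_lemma} makes modules supported in $Z$ closed under admissible extensions, and your argument is precisely the routine verification of that claim --- closure under $\star$ via the lemma applied to the extensions counted by $\mathbf{P}^R_{M,N}$, closure under $\Delta$ via the lemma applied to split sequences, plus transport of the graded connected bialgebra structure and the automatic antipode. There is nothing to flag beyond noting that your write-up supplies the details the paper deems immediate.
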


\begin{example}
Let $\A = \fun \langle t \rangle$. An $\A$-module $M$ may be described in terms of a directed graph with an edge from $m$ to $t \cdot m$ if $t \cdot m \neq 0$. The possible directed graphs arising this way from indecomposable $\A$--modules were classified in \cite{Sz2}. These are either rooted trees (Figure 1), or a cycle with rooted trees attached (Figure 2).  We will refer to these as type $1$ and $2$ respectively. If $M$ is of type $1$, then $\Ann_{\A}(M) = (t)$ (these modules are nilpotent), so the corresponding coherent sheaf is supported at the origin, whereas if $M$ is of type $2$, $\Ann_{\A}(M)=0$, so the support is $\mathbb{A}^1_{/\fun}$. 
\begin{enumerate}
\item The indecomposable modules of type $\alpha$ are  those whose graphs have the property that every vertex has at most one incoming edge. These are the vertical ladders (type $1$), and the directed polygons (type $2$). 
\item If we give $\A$ the grading where $t$ has degree $1$, only type $1$ modules admit a grading.
\end{enumerate}

As shown in \cite{Sz2, Sz4} the corresponding Hall algebras can be described as follows

\begin{enumerate}
\item We have that $\H_{0, \A} = \H^{gr}_{\A} = \H^{gr}_{0,\A}$. This Hopf algebra is dual to the Connes-Kreimer Hopf algebra of rooted forests. 
\item $\H^*_{\A}$, the Hopf dual of the full Hall algebra, is an extension of the Connes-Kreimer algebra by cycles of type $2$.
\item $\H^{\alpha}_{0,\A} = \H^{\alpha,gr}_{\A} = \H^{\alpha,gr}_{0,\A}$ is the Hopf algebra of symmetric functions, with the $n$-vertex ladder corresponding to the $n$-th power sum. 
\end{enumerate}

\end{example}

\begin{minipage}{.5\textwidth}
\begin{center}
\begin{tikzpicture}
\draw [ultra thick,->] (0,0) -- (0.9,0.9);
\draw [fill] (0,0) circle [radius=0.1];
\draw [fill] (1,1) circle [radius=0.1];
\draw [ultra thick,->] (2,0) -- (1.1,0.9);
\draw [fill] (2,0) circle [radius=0.1];
\draw [ultra thick,->] (1,1) -- (1.9,1.9);
\draw [fill] (2,2) circle [radius=0.1];
\draw [fill] (4,0) circle [radius=0.1];
\draw [fill] (3,1) circle [radius=0.1];
\draw [ultra thick,->] (4,0) -- (3.1, 0.9);
\draw [ultra thick,->] (3,1) -- (2.1,1.9);
\draw [fill] (2,1) circle [radius=0.1];
\draw [ultra thick,->] (2,1) -- (2,1.9);
\draw [fill] (2,3) circle [radius=0.1];
\draw [ultra thick,->] (2,2) -- (2,2.9);
\draw [fill] (3,2) circle [radius=0.1];
\draw [ultra thick,->] (3,2) -- (2.1,2.9);
\draw [ultra thick,->] (2,3) -- (2,3.9);
\draw [fill] (2,4) circle [radius=0.1];
\node at (2,-1) {Figure 1};
\end{tikzpicture}
\end{center}
\end{minipage}
\begin{minipage}{.5\textwidth}
\begin{center}
\begin{tikzpicture}
\draw [ultra thick,->] (0,0) -- (0.9,0);
\draw [ultra thick,->] (1,0) -- (1,0.9);
\draw [ultra thick,->] (1,1) -- (0.1,1);
\draw [ultra thick,->] (0,1) -- (0,0.1);
\draw [ultra thick,->] (0,-1) -- (0,-0.1);
\draw [ultra thick,->] (-1,-1) -- (-0.1,-0.1);
\draw [ultra thick,->] (2,2) -- (1.1,1.1);
\draw [ultra thick,->] (2,3) -- (2,2.1);
\draw [ultra thick,->] (3,2) -- (2.1,2);
\draw [fill] (0,0) circle [radius=0.1];
\draw [fill] (1,1) circle [radius=0.1];
\draw [fill] (1,0) circle [radius=0.1];
\draw [fill] (0,1) circle [radius=0.1];
\draw [fill] (-1,-1) circle [radius=0.1];
\draw [fill] (0,-1) circle [radius=0.1];
\draw [fill] (2,2) circle [radius=0.1];
\draw [fill] (2,3) circle [radius=0.1];
\draw [fill] (3,2) circle [radius=0.1];
\node at (1,-2) {Figure 2};
\end{tikzpicture}
\end{center}
\end{minipage}

\section{Skew shapes and torsion sheaves on $\mathbb{A}^n_{/ \fun}$} \label{skew_torsion}

In this section, we will be exclusively focussed on the monoid $\Pn$. It is naturally graded by $\mathbb{Z}^n_{\geq 0}$, with $deg(x_i) = e_i$, where $e_i$ denotes the $i$th standard basis vector in $\mathbb{Z}^n$. We have $$ \mathbb{A}^n_{/ \fun} = \mspec \Pn .$$

\subsection{n-dimensional skew shapes}

We begin by introducing a natural partial order on $\mathbb{Z}^n$, where for $$ x=(x_1, \cdots, x_n) \in \mathbb{Z}^n  \textrm{ and } y = (y_1, \cdots, y_n) \in \mathbb{Z}^n, $$ $$ x \leq y \iff x_i \leq y_i \textrm{ for } i=1, \cdots, n. $$

\begin{definition}
An \emph{n-dimensional skew shape} is a finite convex sub-poset  $S \subset \mathbb{Z}^n$. $S$ is \emph{connected} iff the corresponding poset is. 
We consider two skew shapes $S, S'$ to be equivalent iff $S'$ is a translation of $S$, i.e. if there exists $a \in \mathbb{Z}^n$ such that $S'=a+S$. 
\end{definition}

The condition that $S$ is connected is easily seen to be equivalent to the condition that any two elements of $S$ can be connected via a lattice path lying in $S$. The name \emph{skew shape} is motivated by the fact that for $n=2$, a connected skew shape in the above sense corresponds (non-uniquely) to a difference $\lambda \backslash \mu$ of two Young diagrams in French notation. 

\begin{example} \label{example_a}
Let $n=2$, and $$S \subset \mathbb{Z}^2 = \{ (1,0),(2,0),(3,0),(0,1),(1,1), (0,2) \}$$ (up to translation by $a \in \mathbb{Z}^2).$ Then $S$ corresponds to the connected skew Young diagram  

\begin{center}
\Ylinethick{1pt}
\gyoung(;,;;,:;;;)
\end{center}
\end{example}

\subsection{Skew shapes as modules}

Let $S \subset \mathbb{Z}^n$ be a skew shape. We may attach to $S$ a $\Pn$-module $M_S$ with underlying set
\[
M_S = S \sqcup \{ 0 \},
\]
and action of $\Pn$ defined by
\begin{align*}
x^e \cdot s &= \begin{cases} s+ e, & \mbox{if } s+e \in S \\ 0 & \mbox{otherwise} \end{cases} \\
& e \in \mathbb{Z}^n_{\geq 0}, s \in {S}.
\end{align*}
 It is clear that $M_S$ is a $\Pn$-module of type $\alpha$, and that $\m^k \cdot M_S =0$ for $k$ sufficiently large, where $\m = (x_1, \cdots, x_n)$ is the maximal ideal.  

\begin{example} Let $S$ as in Example \ref{example_a}. $x_1$ (resp. $x_2$) act on the $\fun \langle x_1, x_2 \rangle$-module $M_S$ by moving one box to the right (resp. one box up) until reaching the edge of the diagram, and $0$ beyond that. A minimal set of generators for $M_S$ is indicated by the black dots:

\begin{center}
\Ylinethick{1pt}
\gyoung(;,\bullet;;,:;\bullet;;)
\end{center}
\end{example}

The proof of the following lemma is straightforward:

\begin{lemma} \label{shapes_modules}
Let $S \subset \mathbb{Z}^n$ be a skew shape, and $M_S$ the corresponding $\Pn$-module. Then the following hold:
\begin{enumerate}
\item $\on{Supp} M_S = 0 \subset \mathbb{A}^n$.
\item $M_S$ admits a grading, unique up to translation by $\mathbb{Z}^n$. 
\item $M_S$ is indecomposable iff $S$ is connected.
\item The decomposition of $S$ into connected components $$S=S_1 \sqcup S_2 \cdots \sqcup S_k$$ corresponds to the decomposition of $M_S$ into indecomposable factors $$M_S = M_{S_1} \oplus M_{S_2} \oplus \cdots \oplus M_{S_k}. $$
\item The minimal elements of $S$ form a unique minimal set of generators for $M_S$.
\item $M_S$ is cyclic (i.e. generated by a single element) iff $S$ corresponds to an ordinary $n$-dimensional Young diagram. 
\item If $T \subset S$ is a sub-poset, then $M_T$ is a sub-module of $M_S$ iff $S \backslash T$ is an order ideal. 
\item If $M_T \subset M_S$ is a sub-module corresponding to the sub-poset $T \subset S$, then the quotient $M_S/M_T \simeq M_{S \backslash T}$. 
\end{enumerate}
\end{lemma}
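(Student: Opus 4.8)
The plan is to prove all eight parts uniformly, exploiting throughout the hypothesis that $S$ is a \emph{convex} sub-poset of $\mathbb{Z}^n$: whenever $s,s' \in S$ with $s \leq s'$, every lattice point $p$ with $s \leq p \leq s'$ also lies in $S$. In particular, whenever $x^e \cdot s = s+e \neq 0$ in $M_S$, the whole straight segment of lattice points joining $s$ to $s+e$ lies in $S$, so $s$ and $s+e$ sit in the same connected component. This single remark drives most of the argument. For (1), the text preceding the lemma already gives $\m^k \cdot M_S = 0$ for $k \gg 0$, where $\m=(x_1,\dots,x_n)$; hence $\m^k \subseteq \Ann_{\Pn}(M_S)$, so any prime containing $\Ann_{\Pn}(M_S)$ contains $\m$ and thus equals $\m$, giving $\Supp(M_S)=V(\Ann_{\Pn}(M_S))=\{0\}$. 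For (2), existence is immediate: placing each $s \in S$ in degree equal to its coordinate in $\mathbb{Z}^n$ is compatible with the action since $\deg(x^e)=e$ and $x^e \cdot s = s+e$. For uniqueness I would note that in any grading the relation $x_i \cdot s = s+e_i$ forces $\deg(s+e_i)=\deg(s)+e_i$, so degrees are pinned down along every edge of $S$; connectivity rigidifies the grading on each component, and the remaining ambiguity is exactly the global $\mathbb{Z}^n$-translation.

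The core is (3). If $S$ is disconnected, convexity shows each connected component $S_j$ yields a submodule $M_{S_j}$ with $M_S=\bigoplus_j M_{S_j}$, so $M_S$ is decomposable. Conversely, suppose $S$ is connected and $M_S=N\oplus L$. Each nonzero $s\in S$ lies in exactly one summand; if $s\in N$ and $s+e_i\in S$ then $x_i\cdot s\neq 0$ forces $s+e_i\in N$, while if $s-e_i\in S$ then $x_i\cdot(s-e_i)=s\in N$ forces $s-e_i\in N$ (otherwise it lies in $L$, whose image under $x_i$ stays in $L$). Thus the box set of $N$ is closed under both $+e_i$ and $-e_i$ steps inside $S$, i.e. it is a union of connected components; since $S$ is connected and $N\neq 0$ we get $N=M_S$, proving indecomposability. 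Part (4) then follows by combining $M_S=\bigoplus_j M_{S_j}$ with (3) and the Krull--Schmidt uniqueness already established for $\Amod$, together with Remark \ref{subobj}.

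For (5), any $s\in S$ satisfies $s=x^{s-m}\cdot m$ for some minimal $m\leq s$ (which exists since $S$ is finite), so the minimal elements generate $M_S$; and a minimal element cannot equal $x^e\cdot s'$ with $e\neq 0$, as that would require $s'=s-e<s$ in $S$, so it lies in every generating set. Hence the minimal elements form the unique minimal generating set. Part (6) is then a corollary: $M_S$ is cyclic iff it has a single minimal element, and a convex $S$ with a unique minimal element, translated to the origin, is precisely a down-closed subset of $\mathbb{Z}^n_{\geq 0}$ by convexity, i.e. an $n$-dimensional Young diagram; conversely such diagrams have a unique minimal element. Finally (7) and (8) are bookkeeping about the induced action: the subset $T\sqcup\{0\}$ is closed under the $M_S$-action iff $t\in T,\ t+e\in S$ imply $t+e\in T$, i.e. iff $T$ is up-closed in $S$, equivalently $S\backslash T$ is an order ideal, giving (7); assuming this, $S\backslash T$ is again convex (an order ideal of a convex set), $M_S/M_T$ has underlying set $(S\backslash T)\sqcup\{0\}$, and the quotient action sends $u\mapsto u+e$ exactly when $u+e\in S\backslash T$ and to $0$ otherwise, matching $M_{S\backslash T}$, which is the isomorphism of (8) and a special case of property \ref{property9}.

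I expect the only genuinely delicate points to be the uniqueness clause in (2) and the connected-implies-indecomposable direction of (3): both require propagating information (degrees, respectively summand-membership) across the poset using the \emph{downward} $-e_i$ steps, even though the monoid $\Pn$ acts only upward. It is convexity that legitimizes this propagation, which is why I would foreground that hypothesis from the outset; everything else is a direct unwinding of the definitions of $M_S$, submodule, and quotient.
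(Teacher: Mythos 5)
The paper offers no proof of this lemma at all---it is introduced with "The proof of the following lemma is straightforward"---so there is nothing to compare against except the intended routine verification, and your write-up is a correct rendering of exactly that: convexity of $S$ is rightly identified as the hypothesis that makes parts (3), (7) and (8) work (propagation of summand-membership along $\pm e_i$ steps, up-closed subsets giving submodules, and order ideals staying convex), and the remaining parts are handled soundly. One small caveat: in part (2), for a \emph{disconnected} shape the grading is only unique up to an independent $\mathbb{Z}^n$-translation of each connected component---your own rigidification argument shows precisely this---so the "global translation" phrasing (an imprecision inherited from the lemma's own statement) is literally accurate only in the connected case, which is the case actually used later in Theorem \ref{indecomposable_modules_are_skew_shapes}.
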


\begin{rmk}
It follows from part (6) of Lemma \ref{shapes_modules} that sub-modules of $M_S$ correspond to (not necessarily connected) skew shapes aligned along the outer edge of $S$. 
\end{rmk}

\begin{rmk}
Cyclic $\Pn$-modules can be thought of as the Hilbert Scheme of points on $\mathbb{A}^n/\fun$, and correspond to ordinary $n$-dimensional Young diagrams. This is consistent with the fact that these parametrize the torus fixed-points of $(\mathbb{C}^*)^n$ on the Hilbert scheme of points on $\mathbb{A}^n/\mathbb{C}$, which correspond to monomial ideals in $\mathbb{C}[x_1, \cdots, x_n]$. 
\end{rmk}

\begin{example} Let $S$ be the skew shape below, with $T \subset S$ corresponding to the sub-shape whose boxes contain t's.

\begin{center}
\Ylinethick{1pt}
\gyoung(;t,;;;t,:;;t;t,::;;;t)
\end{center}
\Ylinethick{1pt}
We have $$M_T = \gyoung(;) \oplus \gyoung(;) \oplus \gyoung(;,;;)$$
and
$$ M_S / M_T = \gyoung(;;,:;) \oplus \gyoung(;;) $$
\end{example}

\subsection{Tableaux and filtrations} 

When $n=2$, standard and semi-standard skew tableaux may be interpreted as filtrations on $\fun \langle x_1, x_2 \rangle$-modules. 

\begin{definition} let $S$ be a skew shape with $m$ boxes.
\begin{enumerate}
\item A \emph{standard tableau} with shape $S$ is a filling of $S$ with $1,2, \cdots,m$ such that the entries are strictly increasing left to right in each row and bottom to top in each column.
\item A \emph{semistandard tableau} with shape $S$ is a filling of $S$ with positive integers such that the entries are weakly increasing left to right in each row and strictly increasing bottom to top in each column.
\end{enumerate}
\end{definition}

For $s \in S$, we denote by  $f(s) \in \mathbb{Z}$ the integer assigned to that box, and refer to a tableau by the pair $(S,f)$. 
Let $S_{\geq k} = \{ s \in S \vert  f(s) \geq k \}$. When $S$ is standard or semi-standard, each $S_{\geq k}$ is easily seen to be a skew sub-shape corresponding to a sub-module $M_{S_{\geq k}} \subset M_S$. We have a (necessarily finite) filtration
\[
\cdots M_{S_{\geq r}} \subset M_{S_{\geq r-1}} \subset M_{S_{\geq r-2}} \subset \cdots
\] \Ylinethick{1pt}
The requirement that $(S,f)$ be standard is easily seen to be equivalent to the requirement that
\[
M_{S_r}/M_{S_{r-1}} \simeq \gyoung(;)
\]
whereas semi-standard $(S,f)$ correspond to filtrations with
\[
M_{S_{\geq r-1}}/M_{S_{\geq r}} \simeq \gyoung(;;_2\hdts;) \oplus \gyoung(;;_2\hdts;) \oplus \cdots
\]
i.e. the associated graded consists of direct sums of modules with underlying shapes horizontal strips. Geometrically, these correspond to coherent sheaves supported on the $x_1$-axis in $\mathbb{A}^2_{/\fun}$. 

\subsection{ Type $\alpha$ modules admitting a $\mathbb{Z}^n$ grading are connected skew shapes}

In this section we show that indecomposable type $\alpha$ modules over $\Pn$ admitting a $\mathbb{Z}^n$ grading and supported at $0 \subset \mathbb{A}^n_{/\fun}$ arise from connected skew shapes. We begin with a lemma:

\begin{lemma} \label{filtration}
Let $M$ be a finite $\Pn$-module supported at $0 \subset \mathbb{A}^n_{/\fun}$. Then $M$ has a finite filtration
\[
M \supset \m \cdot M \supset \m^2 \cdot M \supset \cdots \supset \m^r \cdot M = 0_M
\]
where $r \leq \dim(M)$.
\end{lemma}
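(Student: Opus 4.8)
The plan is to realize the displayed chain by iterating the operation $N \mapsto \m \cdot N$, check that it is a descending filtration by submodules, use the hypothesis that $M$ is supported at $0$ to guarantee that it reaches the zero module after finitely many steps, and then extract the bound $r \leq \dim(M)$ from a dimension count. First I would verify, with no hypothesis, that $\m^k \cdot M$ is an $\A$-submodule of $M$ for each $k$ and that the inclusions are nested. Concretely $\m^k \cdot M = \{ x^e \cdot m \mid m \in M,\ |e| \geq k \} \cup \{0_M\}$, where $|e| := \sum_i e_i$; this is pointed and closed under the action since $x^f \cdot (x^e \cdot m) = x^{e+f} \cdot m$ with $|e+f| \geq |e| \geq k$, and $\m^{k+1} \cdot M = \m \cdot (\m^k \cdot M) \subseteq \m^k \cdot M$ because $\m^k \cdot M$ is a submodule.

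Next I would use the support hypothesis to produce nilpotency, i.e. $\m^K \cdot M = 0_M$ for some $K$. Since $M$ is finite it is finitely generated, so $\Supp(M) = V(\Ann_{\A}(M))$, and $\Supp(M) = \{0\}$ forces the maximal ideal $\m$ to be the only prime of $\Pn$ containing $\Ann_{\A}(M)$. As the primes of $\Pn$ are the $\p_T = (x_i : i \in T)$ for $T \subseteq \{1,\dots,n\}$, this forces, for each $i$, an exponent $c_i$ with $x_i^{c_i} \in \Ann_{\A}(M)$; that is, every $x_i$ acts nilpotently. Taking $c = \max_i c_i$, any monomial of total degree $\geq nc$ is divisible by some $x_i^{c_i}$ and hence annihilates $M$, so $\m^{nc} \cdot M = 0_M$. (This nilpotency is exactly the property already recorded for the modules $M_S$.)

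Now let $r$ be the smallest integer with $\m^r \cdot M = 0_M$, which exists by the previous step. The key observation is that with this minimal choice the chain
\[
M \supsetneq \m \cdot M \supsetneq \cdots \supsetneq \m^r \cdot M = 0_M
\]
is strictly decreasing: if $\m^k \cdot M = \m^{k+1} \cdot M$ held for some $k < r$, then applying $\m \cdot (-)$ repeatedly would give $\m^j \cdot M = \m^k \cdot M$ for all $j \geq k$, and in particular $\m^k \cdot M = \m^r \cdot M = 0_M$, contradicting the minimality of $r$. To bound $r$ I would then note that a proper submodule $N \subsetneq M'$ of a finite module satisfies $\dim(N) = |N| - 1 < |M'| - 1 = \dim(M')$, so each strict inclusion above drops the dimension by at least one, whence
\[
\dim(M) = \dim(\m^0 \cdot M) > \dim(\m \cdot M) > \cdots > \dim(\m^r \cdot M) = 0
\]
forces $r \leq \dim(M)$, as claimed.

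The only step demanding genuine care is the passage from the scheme-theoretic support condition to nilpotency; everything else — the strictness coming from the minimality of $r$ and the telescoping dimension count — is elementary once nilpotency is in hand, and is precisely where the clean bound $r \leq \dim(M)$ originates.
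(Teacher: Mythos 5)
Your proof is correct and takes essentially the same route as the paper's: the support hypothesis forces the radical of $\Ann_{\A}(M)$ to be $\m$, finite generation of $\m$ gives $\m^s \subset \Ann_{\A}(M)$ for some $s$ (your explicit listing of the primes of $\Pn$ plus the pigeonhole count is exactly the standard proof of this step), and the bound $r \leq \dim(M)$ follows from strictness of the minimal filtration. The only difference is one of detail: you spell out the strictness argument and the dimension count that the paper's one-line proof leaves implicit.
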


\begin{proof}
The hypothesis implies that  the radical of $\Ann(M)$ is  $\m$. Since $\m$ is finitely generated, we have that $\m^s \subset \Ann(M)$ for some $s \in \mathbb{N}$, which implies that $\m^s \cdot M = 0_M$ for sufficiently large $s$. The bound on $r$ follows from the fact that the filtration is strictly decreasing.
\end{proof}

\begin{theorem} \label{indecomposable_modules_are_skew_shapes}
Let $M$ be a finite, $\Zn$-graded, indecomposable, type $\alpha$ $\Pn$-module such that $\supp M=0$. Then $M \simeq M_S$ for a connected skew shape $S \subset \mathbb{Z}^n$.
\end{theorem}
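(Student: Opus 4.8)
The plan is to use the $\mathbb{Z}^n$-grading to embed $M$ into the lattice and then recover the skew shape as the set of degrees that occur. Write $d : M \setminus \{0_M\} \to \mathbb{Z}^n$ for the degree map supplied by the grading, so that $d(x^e \cdot m) = d(m) + e$ whenever $x^e \cdot m \neq 0_M$. The type $\alpha$ hypothesis says each operator $\phi_{x^e}$ is a partial injection on $M \setminus \{0_M\}$ (as noted after the definition of type $\alpha$), while commutativity of $\Pn$ says the $x_i$ act by pairwise commuting maps $M \to M$ --- crucially \emph{including} their zero values, so that $x_i \cdot (x_j \cdot m) = 0_M$ if and only if $x_j \cdot (x_i \cdot m) = 0_M$. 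Finally, declaring an edge $m \to x_i \cdot m$ whenever $x_i \cdot m \neq 0_M$ turns $M \setminus \{0_M\}$ into a directed graph; a partition of the vertices into two nonempty subsets each closed under the action would split $M$ as a direct sum, so indecomposability is equivalent to connectedness of the underlying undirected graph, and an edge records a covering relation $s, s+e_i$, so this connectedness matches poset-connectedness of the set of degrees. I will use these remarks freely.

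I would reduce the whole statement to the following \emph{Key Lemma (forward reachability)}: if $m, m'$ lie in the same connected component and $d(m') \geq d(m)$, then $m' = x^{\,d(m') - d(m)} \cdot m$, and in particular this is nonzero. Granting it, the theorem falls out quickly. Taking $d(m) = d(m')$ forces $m = m'$, so $d$ is injective; set $S := d(M \setminus \{0_M\}) \subset \mathbb{Z}^n$, well-defined up to translation. For convexity, suppose $x \leq y \leq z$ with $x, z \in S$ and put $\tilde x = d^{-1}(x)$. The Key Lemma gives $d^{-1}(z) = x^{\,z - x} \cdot \tilde x \neq 0_M$, and factoring $x^{z-x} = x^{z-y}\,x^{y-x}$ shows $x^{y-x} \cdot \tilde x \neq 0_M$; its degree is $y$, so $y \in S$ and $S$ is convex. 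Connectedness of the graph translates into connectedness of the poset $S$, so $S$ is a connected skew shape. Finally the assignment $m \mapsto d(m)$ identifies $M$ with $M_S$: it is a pointed bijection onto $S$ by injectivity and surjectivity of $d$, and it intertwines the actions because $x_i \cdot m \neq 0_M$ holds precisely when $d(m) + e_i \in S$ --- the forward implication is immediate, and the reverse is exactly the Key Lemma applied with displacement $e_i$.

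The crux, and the main obstacle, is the Key Lemma, and the essential mechanism is commutativity. The phenomenon to exploit is that an ``up-and-over'' detour forces the direct monotone path: if a walk from $m$ to $m'$ has net displacement $g = d(m') - d(m) \geq 0$ but wanders above the level $d(m')$, then for a suitable $h \geq 0$ the element $x^h \cdot m'$ is \emph{also} reached from $m$, equalling $x^{h+g} \cdot m = x^h \cdot (x^g \cdot m)$; since $x^h$ is a partial injection and $x^h \cdot m' \neq 0_M$, this gives $x^g \cdot m = m'$. Concretely I would induct on the number of backward steps ($x_i^{-1}$-moves) in a walk from $m$ to $m'$, the aim being to slide a backward move past forward moves until it cancels a compensating forward move; commutativity together with type $\alpha$ validate the individual swaps in the generic case. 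Nilpotency (Lemma \ref{filtration}) guarantees the induction terminates, and an alternative organization is an induction on $\dim M$ that peels off a source $c$ --- the set $M \setminus \{c\}$ is a submodule, still type $\alpha$ and graded by Proposition \ref{normal_properties} and Lemma \ref{graded_properties} --- and then re-attaches it.

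The delicate point, where commutativity must be invoked rather than the naive partial-bijection calculus, is that such a slide can a priori send an intermediate vertex to $0_M$: the swap of a backward $x_i^{-1}$ past a forward $x_j$ is only forced when the relevant ``corner'' $d(\cdot) + e_i + e_j$ is occupied. One must therefore check, using the identity $x_i \cdot (x_j \cdot m) = x_j \cdot (x_i \cdot m)$ in the strong (zero-including) sense, that the requisite corner is present exactly when the reduction needs it, so that the walk stays within $M \setminus \{0_M\}$ throughout. This is the step that genuinely uses all three hypotheses --- type $\alpha$, the grading, and commutativity of $\Pn$ --- and it is precisely what rules out the pathological $n$-dimensional modules that the grading hypothesis is imposed to exclude; I expect verifying it carefully to be the bulk of the work.
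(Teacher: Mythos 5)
Your reduction of the theorem to the Key Lemma is correct and cleanly executed: granting forward reachability, injectivity of $d$, convexity and connectedness of $S=d(M\setminus\{0_M\})$, and the identification $M\simeq M_S$ all follow exactly as you argue. The genuine gap is that the Key Lemma itself is never proved, and it is not a routine verification to be deferred --- it carries the entire content of the theorem (it holds in $M_S$ by convexity, and by your own reduction it implies the theorem, so it is essentially equivalent to it). Moreover, the difficulty you flag is real and your sketched mechanisms do not resolve it. The ``requisite corner'' is \emph{not} always present: in the skew shape $S=\{(0,0),(1,0),(0,1)\}$ the corner $(1,1)$ is absent, so a slide of a backward move past a forward move can be genuinely blocked. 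Concretely, for a walk whose steps are, in order, forward $x_1$, backward $x_2$, backward $x_1$, forward $x_2$ (net displacement $0$, elements $v_0,\dots,v_4$), no backward step is separated from a compensating same-direction forward step by forward steps only, so neither cancellation rule applies; one must instead produce $h\geq 0$ with $x^h v_4=x^h v_0\neq 0_M$ (here $h=e_1$ works, via $x_1v_4=x_1x_2v_3=x_2x_1v_3=x_2v_2=v_1=x_1v_0$ and type $\alpha$). But the natural general recipe for $h$ --- push everything up to the componentwise supremum of the degrees along the walk --- fails, since that supremum need not lie in the shape (a walk through $(1,0)$ and $(0,1)$ in the example above has supremum $(1,1)\notin S$, so the pushed-up walk dies at $0_M$); and strengthening the induction (e.g.\ ``for all $h\geq 0$ with $h+g\geq 0$ and $x^h v_k\neq 0_M$ one has $x^h v_k=x^{h+g}v_0$'') hits the same obstruction at a backward $x_i$-step with $h_i=0$ and $x^{h+e_i}v_{k+1}=0_M$. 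So ``the corner is present exactly when the reduction needs it'' is precisely the statement that must be proved, no argument for it is given, and by your own admission it is ``the bulk of the work.''

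For comparison, the paper avoids walk combinatorics entirely: it inducts on $\dim(M)$, uses Lemma \ref{filtration} to find $v$ with $\m\cdot v=0_M$, passes to the quotient $M/\langle v\rangle$, applies the inductive hypothesis to its indecomposable summands, and then uses indecomposability together with type $\alpha$ and the grading to show that at most $n$ summands occur and that their shapes reattach at $\deg(v)$ to form a connected skew shape. Your one-sentence ``alternative organization'' --- peel off a source $c$, note that $M\setminus\{c\}$ is a graded type $\alpha$ submodule, induct on its components, and reattach --- is the mirror image of the paper's argument and is the most promising way to rescue your proposal; but in your write-up it is also undeveloped, and if you pursue it the Key Lemma becomes a corollary of the theorem rather than a stepping stone to it. As it stands, the proposal establishes the (easier) deduction of the theorem from forward reachability and leaves the hard half open.
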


\begin{proof}
We proceed by induction on $d=\dim(M)$. The result is obvious for $d=0, 1$. Suppose now that it holds for $d < p = \dim(M)$. Write the decomposition into graded components as $$M = \bigoplus_{d \in \Zn} M_d.$$
By Lemma \ref{filtration}, we can find $v \in M$ such that $\m \cdot v = 0$. This implies that $v$ generates a one-dimensional $\Pn$-submodule $\langle v \rangle $ of $M$. Denote by 
$$ \pi: M \rightarrow M/ \langle v \rangle $$ 
the quotient map,  and let 
\begin{equation}
M/ \langle v \rangle = M_1 \oplus M_2 \oplus \cdots \oplus M_k
\end{equation}
be the decomposition of $M/ \langle v \rangle$ into non-zero indecomposable $\Pn$-modules. For each $i$, let $\wt{M}_i = \pi^{-1}(M_i)$. We have $M= \cup_i \wt{M}_i$, $\wt{M}_i \cap \wt{M}_j = \{ 0, v \}$ for $i \neq j$, and $\wt{M}_i/ \langle v \rangle = M_i$. $\pi$ yields a bijection between $M_i$ and the subset $\sigma(M_i) := \wt{M}_i \backslash \{ v \} \subset M$. 
By part (1) of Lemma \ref{graded_properties}, each $M_i$ is graded and satisfies condition $\alpha$, so by the induction hypothesis, $M_i \simeq M_{S_i}, \; i=1,\cdots ,k$ for connected skew shapes $S_i \subset \Zn$. 

We make the following two observations:
\begin{enumerate}
\item $v \in \m \cdot \sigma(M_i)$ for each $i$. If not, and say $v \notin \m \cdot \sigma(M_1)$, then $\sigma(M_1)$ is a proper direct summand of $M$, contradicting the hypothesis that $M$ is indecomposable. 
\item By property $\alpha$, there is at most one $v_i \in M_{deg(v) - e_i}$ such that $x_i \cdot v_i = v$, and since $M$ is graded, the $v_i$'s are pairwise distinct. 
\end{enumerate}

These observations combined show that $k \leq n$, and that each $\wt{M}_i$ contains a non-empty set $\{ v_{i,j_1}, \cdots, v_{i,j_r} \}$ of elements such that $x_{j_p} \cdot v_{i,j_p} = v$. 

We consider the case $n=2$. There are two possibilities.
\begin{enumerate}
\item $M/\langle v \rangle$ is indecomposable (i.e. $k=1$). Let $S=S_1 \sqcup deg(v) \subset \Ztwo$. There are three possibilities:
\begin{enumerate}
\item $deg(v) - e_1 \in S$ but $deg(v) - e_2 \notin S$. This means that $deg(v)$ is located on the right end of the bottom row of $S$. 
\item $deg(v) - e_2 \in S$ but $deg(v) - e_1\notin S $. This means that $deg(v)$ is located at the top of the leftmost column of $S$. 
\item $deg(v) - e_1 \in S$ and $deg(v) - e_2 \in S $. This means that $deg(v)$ is located at outer boundary of $S$ with boxes immediately to the left and below $deg(v)$. Since $S_1$ was assumed to be a connected skew shape, $deg(v) - e_1 - e_2 \in S$ as well. 
\end{enumerate}
In all three cases, $S$ is a connected skew shape, and it is clear that $M \simeq M_S$. 
\item $M/ \langle v \rangle \simeq M_1 \oplus M_2$ (i.e. $k=2$). Let $S=S_1 \sqcup S_2 \sqcup deg(v)$. We may assume, after switching the labels $M_1$ and $M_2$ if necessary, that there exist $v_1 \in M_{S_1}, v_2 \in M_{S_2}$ (necessarily unique) such that $x_1 \cdot v_1 = v$, $x_2 \cdot v_2 =v$. This means that $deg(v) - e_1 \in S_1, deg(v) - e_2 \in S_2$. $deg(v)$ is therefore located at the right end of the bottom row of $S_1$ and the top of the leftmost column of $S_2$. $S$ is therefore a connected skew shape (since $S_1, S_2$ are assumed connected skew), and $M \simeq M_S$. 
\end{enumerate}

The case of general $n$ can be dealt with using the same approach.

\end{proof}

\subsection{Action of $S_n$}

$S_n$ - the symmetric group on $n$ letters, acts by automorphisms on $\A = \Pn$ by $\sigma \cdot x_{i} = x_{\sigma(i)}$, $\sigma \in S_n$. It is easily seen to be the full automorphism group of $\A$, or equivalently, of $\mathbb{A}^n_{/\fun}$. This action induces an action on the categories $\Anmod$, $\AnNmod$, and $\Anmod^{gr}$ sending a module $M$ to the module $M^{\sigma}$ with the same underlying pointed set, and where $\A$ acts on $M^{\sigma}$ by 
\begin{equation} \label{sym_action}
a \cdot m := \sigma(a) \cdot m, \; m \in M^{\sigma}
\end{equation}
This action preserves the property of a module being indecomposable, and so by Theorem \ref{indecomposable_modules_are_skew_shapes}, induces an action on skew shapes, which is easily seen to coincide with the "classical" action. For $n=2$, this amounts to transposition of the skew shape.

\begin{example}
Let $n=2$, and $\sigma=(1 \; 2 ) \in S_2$ the only non-identity element. If
\Ylinethick{1pt}
\[
S = \gyoung(;,;;;)
\]
then
\[
S^{\sigma} = \gyoung(;,;,;;)
\]
\end{example}

\section{Hopf and Lie structures on skew shapes} \label{Hopf_skew}

Let $A=\Pn$, and $$Z  = 0 = V((x_1, \cdots, x_n)) \subset \mathbb{A}^n_{/\fun} =  \mspec \Pn$$ the origin. By Proposition \ref{Hall_with_support}, the Hall algebra $\H^{\alpha,gr}_{0,\A}$ has the structure of a graded connected co-commutative Hopf algebra, which we denote by $\mathbf{SK}_n$. By the Milnor-Moore theorem, $$ \SK_n \simeq \mathbb{U}(\skn)$$ where the Lie algebra $$ \skn = \on{span} \{ \delta_{\ol{M}} \vert M \textrm{ type } \alpha, \textrm{ indecomposable, admits a grading, } \supp M = 0  \}.$$ By Theorem \ref{indecomposable_modules_are_skew_shapes}, $\skn$ has a basis which can be identified with the set of connected $n$-dimensional skew shapes. The $S_n$ action on skew shapes \ref{sym_action} is easily seen to induce an action on $\SK_n$ by Hopf algebra automorphisms. We thus have

\begin{theorem} \label{skew_Hall_theorem}
The Hall algebra $\SK_n = \H^{\alpha,gr}_{0, \A}$ is isomorphic to the enveloping algebra $\mathbb{U}(\skn)$. The Lie algebra $\skn$ may be identified with $$ \skn = \{ \delta_{M_S} \vert S \textrm{ a connected } n- \textrm{ dimensional skew shape} \} $$ with Lie bracket 
\[
[\delta_{\ol{M_S}}, \delta_{\ol{M_T}}] = \delta_{\ol{M_S}} \star \delta_{\ol{M_T}} - \delta_{\ol{M_T}} \star \delta_{\ol{M_S}}
\]
The symmetric group $S_n$ acts on $\SK_n$ (resp. $\skn$) by Hopf (resp. Lie) algebra automorphisms. 
\end{theorem}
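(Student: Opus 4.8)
The plan is to derive the statement as a consequence of the Milnor--Moore theorem together with the classification in Theorem \ref{indecomposable_modules_are_skew_shapes}, reserving the genuinely new content for the verification that the $S_n$-action respects the full Hopf structure.

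First, Proposition \ref{Hall_with_support} already tells us that $\SK_n = \H^{\alpha,gr}_{0,\A}$ is a graded, connected, co-commutative Hopf algebra over $\mathbb{Q}$. Exactly as was done earlier for $\H_{\A}$ itself, I would invoke the Milnor--Moore theorem to conclude $\SK_n \simeq \mathbb{U}(\P)$, where $\P$ is the Lie algebra of primitive elements. The co-product formula (\ref{Hall_coprod}) shows, by the same reasoning used for $\H_{\A}$, that $\delta_{\ol{M}}$ is primitive iff $M$ is indecomposable in $\Amod^{\alpha,gr}_0$; hence $\P = \skn$ as defined, with bracket the anti-symmetrization of $\star$. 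Theorem \ref{indecomposable_modules_are_skew_shapes} then identifies a basis of $\skn$ with the connected $n$-dimensional skew shapes $S$ via $S \mapsto \delta_{\ol{M_S}}$. This yields the first two assertions with essentially no new computation.

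The remaining and main point is the $S_n$-action, which I would organize around the autoequivalence $\Phi_\sigma : \Amod \to \Amod$, $M \mapsto M^\sigma$, induced by the monoid automorphism $x_i \mapsto x_{\sigma(i)}$ of $\A = \Pn$ via (\ref{sym_action}). Since $M^\sigma$ has the same underlying pointed set as $M$, a subset is an $\A$-submodule of $M^\sigma$ iff it is one of $M$, and the quotient and direct-sum constructions agree on underlying sets; thus $\Phi_\sigma$ carries submodules to submodules, admissible short exact sequences to admissible short exact sequences, and direct-sum decompositions to direct-sum decompositions. I would then check that $\Phi_\sigma$ preserves the defining conditions of $\Amod^{\alpha,gr}_0$: type $\alpha$ is preserved because the $\funmod$-endomorphism $\phi_a$ of $M^\sigma$ equals $\phi_{\sigma(a)}$ on $M$ and $\sigma$ permutes $\A$; support at $0$ is preserved because $\Ann_\A(M^\sigma) = \sigma^{-1}(\Ann_\A(M))$ and the maximal ideal $\m$ is $S_n$-invariant; and admitting a grading is preserved by re-indexing the graded pieces along the coordinate permutation $e_i \mapsto e_{\sigma(i)}$ of $\Zn$.

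With these compatibilities in hand, the induced linear map $\delta_{\ol{M}} \mapsto \delta_{\ol{M^\sigma}}$ on $\SK_n$ is a Hopf algebra automorphism: the Hall structure constants $\mathbf{P}^R_{M,N}$ are preserved because $L \mapsto L$ is a bijection between the relevant sets of submodules of $R$ and of $R^\sigma$, so $\star$ is respected, and the coproduct is respected because it is read off from direct-sum decompositions, which $\Phi_\sigma$ permutes bijectively. Restricting to primitives gives the $S_n$-action on $\skn$ by Lie algebra automorphisms. I expect the main obstacle to be purely bookkeeping: confirming that grading-\emph{admissibility} (existence of a grading, not a fixed one) transports correctly under the coordinate permutation, and being careful about whether $\sigma \mapsto \Phi_\sigma$ is a left or right action, since one may need to pass to $\sigma^{-1}$ to match the convention in (\ref{sym_action}). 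No substantive difficulty arises once $\Phi_\sigma$ is recognized as an exact autoequivalence preserving the subcategory.
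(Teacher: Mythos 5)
Your proposal is correct and follows essentially the same route as the paper: the paper likewise obtains the theorem by combining Proposition \ref{Hall_with_support}, the Milnor--Moore theorem (identifying primitives with $\delta$-functions on indecomposables), and Theorem \ref{indecomposable_modules_are_skew_shapes}, and then simply asserts that the $S_n$-action on skew shapes is ``easily seen'' to give Hopf algebra automorphisms. The only difference is that you spell out the verification of that last claim (preservation of submodules, type $\alpha$, support, and grading-admissibility under $M \mapsto M^{\sigma}$), which the paper leaves implicit; your details are accurate.
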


We recall from Section \ref{Hall_Alg} that the product in $\SK_n$ is
\begin{equation} \label{skew_mult}
\delta_{\ol{M_S}} \star \delta_{\ol{M_T}} = \sum_{ R \textrm{ a skew shape } } \mathbf{P}^R_{S,T} \delta_{\ol{M_R}} 
\end{equation}
where 
\[
\mathbf{P}^R_{S,T}  := \# \vert \{ M_L \subset M_R , M_L \simeq M_T, M_R / M_L \simeq M_S \} \vert
\]
Note that the skew shapes $R$ being summed over need not be connected. In fact, when $S$ and $T$ are connected, the product (\ref{skew_mult}) will contain precisely one disconnected skew shape $S \sqcup T$ corresponding to the split extension $M_S \oplus M_T$.

\begin{example} Let $n=2$. By abuse of notation,  we identify the skew shape $S$ with the delta-function $\delta_{\ol{M_S}} \in \mathbb{S}_2$. Let
\Ylinethick{1pt}
\[
S = \gyoung(;,;;) \; \;  \; \; \;T = \gyoung(;;)
\]
We have
\[
S \star T = \gyoung(;s,;s;s;t;t) \; + \; \gyoung(;t;t,:;s,:;s;s) \; + \; \gyoung(;s,;s;s) \oplus \gyoung(;t;t) 
\]
\[
T \star S = \gyoung(;s,;s;s,;t;t) \; + \; \gyoung(;s,;s;s,:;t;t) \; + \; \gyoung(;t;t;s,::;s;s) \; + \; \gyoung(;s,;s;s) \oplus \gyoung(;t;t) 
\]
and
\[
[S,T] =  \gyoung(;s,;s;s;t;t) \; + \; \gyoung(;t;t,:;s,:;s;s) \; - \; \gyoung(;s,;s;s,;t;t) \; - \; \gyoung(;s,;s;s,:;t;t) \; - \; \gyoung(;t;t;s,::;s;s)
\]
where for each skew shape we have indicated which boxes correspond to $S$ and $T$.
\end{example}

As the above example shows, for connected skew shapes $S,T$, the product $S \star T$ in the Hall algebra involves all ways of "stacking" the shape $T$ onto that of $S$ to achieve a skew shape, as well as one disconnected shape (which may be drawn in a number of ways). 

\begin{rmk}
it is easy to see that the structure constants of the Lie algebra $\skn$ in the basis of skew shapes are all $-1, 0,$ or $1$. 
\end{rmk}

\section{Hopf algebra duals of Hall algebras} \label{Hopf_duals}

In this section $\A$ will denote an arbitrary finitely generated monoid. 
The Hall algebras constructed in this paper are all isomorphic to enveloping algebras, where the combinatorially interesting information resides in the product. Dualizing, we obtain commutative (but in general not co-commutative) Hopf algebras whose co-product now records the interesting combinatorics. The underlying associative algebra of these duals is always a polynomial ring on a collection of indecomposable $\A$-modules. We begin with the dual of $\H_{\A}$. 

Let  $\H^*_{\A} = \mathbb{Q}[x_{\ol{M}}]$, where $\ol{M}$ runs over the distinct isomorphism classes of indecomposable $\A$-modules. As an associative algebra, $\H^*_{\A}$ is therefore a polynomial ring. We adopt the convention that if $M \in \Amod$ is written in terms of indecomposable modules as $M=M_1 \oplus M_2 \cdots \oplus M_k$, then
\[
x_{\ol{M}} := x_{\ol{M}_1} \cdot x_{\ol{M}_2} \cdots x_{\ol{M}_{k-1}} \cdot x_{\ol{M}_k}
\]
Let $$ \Delta: \H^*_{\A} \rightarrow \H^*_{\A} \otimes \H^*_{\A}$$ be defined on $x_{\ol{M}}$, $M$ indecomposable, by
\begin{equation} \label{dual_coprod}
\Delta(x_{\ol{M}}) = \sum_{N \subset M} x_{\ol{M/N}} \otimes x_{\ol{N}}
\end{equation}
and extended by the requirement that it be an algebra morphism (i.e. multiplicative). Let $$ \langle , \rangle:  \H^*_{\A} \otimes \H_{A} \rightarrow \mathbb{Q} $$ denote the pairing defined by
\[
\langle x_{\ol{M}}, \delta_{\ol{N}} \rangle = \begin{cases} 0 & M \nsimeq N \\ 1 & M \simeq N \end{cases}
\]
One can readily verify that $\H^{*}_{\A}$ with its commutative product and the co-product $\Delta$ is a graded connected commutative bialgebra, hence a Hopf algebra, and that $\langle, \rangle$ is a Hopf pairing. 

If $\A$ is graded, we may dualize the diagram (\ref{comm_diag}) to obtain:

 \begin{equation} \label{comm_diag2}
  \begin{tikzcd}
    \H^*_{\A}   &( \H^{\alpha}_{A})^* \arrow{l}  \\
    ( \H^{gr}_{\A})^* \arrow{u} & (\H^{\alpha, gr}_{\A})^* \arrow{u}  \arrow[swap]{l} \\
  \end{tikzcd}
\end{equation}

Each of the Hopf duals here may be given a description analogous to $\H^*_{\A}$. For instance, $(\H^{\alpha}_{\A})* = \mathbb{Q}[x_{\ol{M}}]$, where $\ol{M}$ runs over the isomorphism classes of indecomposable type $\alpha$ modules. The formula (\ref{dual_coprod}) remains the same. When $\A$ is commutative, and $Z \subset \mspec \A$ is a closed subset, we may incorporate support conditions in the obvious way.

\newpage

\address{\tiny DEPARTMENT OF MATHEMATICS AND STATISTICS, BOSTON UNIVERSITY, 111 CUMMINGTON MALL, BOSTON} \\
\indent \footnotesize{\email{szczesny@math.bu.edu}}

\end{document}